\newif\ifdviwin
\titleformat{\section}{\large\bfseries\center}{\thesection}{1em}{\vspace{-.5cm}}
\titleformat{\subsection}[runin]{\bfseries}{\thesubsection}{1em}{}
\titleformat{\subsubsection}[runin]{\bfseries}{\thesubsubsection}{1em}{}
\newif\ifdviwin
 \newtheorem{defi}{Definition}[section]
 \newtheorem{teo}[defi]{Theorem}
 \newtheorem{pro}[defi]{Proposition}
 \newtheorem{cor}[defi]{Corollary}
 \newenvironment{proof}{\rm \trivlist \item[\hskip \labelsep{\it
      Proof}.]}{\nopagebreak \hfill $\Box$ \endtrivlist}
\numberwithin{equation}{section}
\def\l3{\mathbb{L}^3}
\def\r{\mathbb{R}}
\def\a{\textbf{a}}
\def\v{\textbf{v}}
\begin{document}

\thispagestyle{empty}

\begin{center}
%\rule{16cm}{1.5pt}\vspace{0.5cm}

\renewcommand{\thefootnote}{\,}
{\fontsize{16}{19} \textbf{Invariant $\lambda$-translators in Lorentz-Minkowski space} 
\footnote{
\hspace{.15cm}\emph{Mathematics Subject Classification:} 53A10, 53C42, 34C05, 34C40\\
\emph{Keywords}: Mean curvature flow; $\lambda$-translator; invariant surface; Lorentzian space.}}

\vspace{0.5cm} { Antonio Bueno, Irene Ortiz}\\
	%\rule{16cm}{1.5pt}
\end{center}

%\vspace{.5cm}

\begin{abstract}
Given $\lambda\in\mathbb{R}$ and $\v\in\mathbb{L}^3$, a $\lambda$-translator with velocity $\v$ is an immersed surface in $\mathbb{L}^3$ whose mean curvature satisfies $H=\langle N,\v\rangle+\lambda$, where $N$ is a unit normal vector field. When $\lambda=0$, we fall into the class of translating solitons of the mean curvature flow. In this paper we study $\lambda$-translators in $\mathbb{L}^3$ that are invariant under a 1-parameter group of translations and rotations. The former are cylindrical surfaces and explicit parametrizations are found, distinguishing on the causality of both the ruling direction and the $\lambda$-translators. In the case of rotational $\lambda$-translators we distinguish between spacelike and timelike rotations and exhibit the qualitative properties of rotational $\lambda$-translators by analyzing the non-linear autonomous system fulfilled by the coordinate functions of the generating curves.
\end{abstract}

\setcounter{tocdepth}{2}
\tableofcontents

\section{Introduction}

In the last decades, the study of translating solitons of the mean curvature flow (translators for short) in the Euclidean 3-space has revealed as an important and fruitful theory, becoming an active field of research that has culminated in some groundbreaking results. We refer the reader to \cite{CSS, Hui, HuSi, Ilm, LiMa, MSHS, SpXi} and references therein for an outline of the development of this theory. Translators are solutions to the mean curvature flow that evolve purely by translations in some direction $\v\in\r^3$. In this case, the mean curvature flow equation has the expression
$$
H=\v^\bot,
$$
where $(\cdot)^\bot$ represents the normal component. In the literature, the vector $\v$ is commonly known as the \emph{velocity vector}. Translators naturally appear as limit surfaces after a proper rescaling near type II singularities \cite{HuSi}, and are characterized as minimal surfaces in a certain conformal space \cite{Ilm} or as weighted minimal surfaces in a space endowed with weighted area functional \cite{Gro}. All these characterizations make readily available techniques coming from different areas such as analysis of PDE's, Geometric Analysis and Geometric Measure Theory, among others.

A class of surfaces that are a generalization to the translators are the \emph{$\lambda$-translators}, defined by the prescribed mean curvature problem
$$
H=\v^\bot+\lambda,\quad\lambda\in\r.
$$
Obviously, when $\lambda=0$ we recover the translators. These $\lambda$-translators are solutions to a mean curvature flow with a constant forcing term \cite{Hui} and have constant weighted mean curvature equal to $\lambda$ \cite{Gro} in a density space. However, this class of surfaces has not received as much attention as translators have. In this sense, we refer the reader to \cite{BuLo, BLO, BuOr, Lop1, Lop2} for recent advances on $\lambda$-translators. We remark the independent works of López \cite{Lop1} and the authors \cite{BuOr}, where they classified invariant $\lambda$-translators under Euclidean translations and rotations.

Besides the Euclidean space, translators have been widely studied in other Riemannian spaces such as the product spaces \cite{Bue,LiMa}, the Heisenberg and Solvable groups \cite{Pip1,Pip2}, the Lorentz-Minkowski space \cite{Kim} and more generally in a Lorentzian setting with submersion structure \cite{LaOr} or in generalised Robertson-Walker geometries \cite{ALO} equipped with a natural conformal Killing time-like vector field. Specifically, in \cite{Kim} the author classified rotational spacelike translators in the Lorentz-Minkowski space following a phase space analysis of the solutions of the ODE fulfilled by the coordinates of the profile curve of such a rotational translator, while in \cite{LaOr} the authors extended this study and also approached timelike translators.

Following the development of the theory of $\lambda$-translators in the Euclidean space as a natural generalization of the theory of translating solitons, our goal in the present paper is to start to develop the theory of $\lambda$-translators in the Lorentz-Minkowski space $\l3$, by studying invariant $\lambda$-translators under 1-parameter groups of isometries. Specifically, we address the study of cylindrical and rotational $\lambda$-translators. The main difference here with the Euclidean space is to take into account both the causality of the $\lambda$-translator and the causality of either the translation direction or the rotation axis. Nonetheless, the results obtained and the shape of the profile curves strongly resemble the ones already achieved in the aforementioned pioneer works \cite{Kim,LaOr}.

We detail the organization of the paper and highlight some of the main results.

In Section \ref{secbasicnotation}, we start by setting the notation and defining the 1-parameter groups of isometries that will play the main role in this paper. In Sec. \ref{sec2} we classify cylindrical $\lambda$-translators when the translating direction is spacelike and timelike, distinguishing also between the causality of the $\lambda$-translator. We provide explicit parametrizations of cylindrical $\lambda$-translators by integrating the resulting ODE that relates the mean curvature with the curvature of the base curve.

The main contents of this paper are compiled in Section \ref{secclasificacionrotacionales}, where we classify rotational $\lambda$-translators. First, we prove in Prop. \ref{propejeparalelodensity} that the velocity vector and the rotation axis must be parallel. In Sec. \ref{subsectexistenceradial} we prove Th. \ref{thexistenciaradial}, where we discuss and exhibit the existence of rotational $\lambda$-translators intersecting orthogonally the rotation axis. Then, in Sec. \ref{subsecclasificacionrotacionalestimelikeaxis} we classify rotational $\lambda$-translators about a timelike axis, while in Sec. \ref{subsecclasificacionrotacionalesspacelikeaxis} we assume the spacelike causality of the rotation axis. In both cases, we treat the cases where the $\lambda$-translator is spacelike and timelike.

\section{Preliminaries}\label{secbasicnotation}

Let us fix the basic notation that is used throughout this work. The Lorentz-Minkowski space $\l3$ is the space $\r^3$ with coordinates $(x,y,z)$, endowed with the metric $\langle\cdot,\cdot\rangle=dx^2+dy^2-dz^2$. A global basis is given by the unitary vectors
$$
e_1=(1,0,0),\quad e_2=(0,1,0),\quad e_3=(0,0,1).
$$
A vector $\v\in\l3$ is spacelike if $\langle \v,\v\rangle>0$, timelike if $\langle \v,\v\rangle<0$ and lightlike if $\langle \v,\v\rangle=0$. A vector subspace $V$ is spacelike (resp. timelike or lightlike) if $\langle\cdot,\cdot\rangle_{|V}$ is spacelike (resp. is timelike or lightlike). If $\textbf{n}_V$ denotes an orthogonal vector to $V$ in the Euclidean sense, then $V$ is spacelike (resp. timelike or lightlike) if and only if $\textbf{n}_V$ is timelike (resp. spacelike or lightlike).

An immersed surface $\Sigma$ in $\l3$ is spacelike (resp. timelike) if at every $p\in\Sigma$ the tangent plane $T_p\Sigma$ is spacelike (resp. timelike). As a matter of fact, if $N$ is a unit normal on $\Sigma$, then $\Sigma$ is spacelike (resp. timelike) if and only if $N$ is timelike (resp. spacelike).

In this paper we focus on the following class of immersed surfaces.
\begin{defi}\label{deflambdatrans}
Given $\lambda\in\r$ and $\v\in\l3$, a surface $\Sigma$ in $\l3$ is a \emph{$\lambda$-translator} if its mean curvature satisfies
	\begin{equation}\label{eqdeflambdatranslator}
		H(p)=\langle N(p),\v\rangle+\lambda,\quad p\in\Sigma,
	\end{equation}
	where $N$ is a unit normal vector field on $\Sigma$.
\end{defi}
Obviously, when $\lambda=0$ we have translating solitons of the mean curvature flow, or \emph{translators} for short. In similar fashion as for translators, the vector $v$ will be referred as the \emph{velocity vector}. We assume $\lambda\neq0$ since otherwise we are studying translating solitons. Furthermore, after a change of the orientation we assume $\lambda>0$ without losing generality.

Next we recall isometries of $\l3$ that will play a crucial role in this paper. On the one hand, we consider the translations in a fixed direction $\a\in\l3$ which are the 1-parameter family of isometries given by
$$
T_\a(p,t)=p+t\a,\quad p\in\l3,\ t\in\r.
$$
On the other hand, we also consider the rotations in $\l3$. They have a different behavior as in $\r^3$ depending on the causality of the rotation axis. In this paper we focus on rotations about timelike and spacelike axes.
\begin{enumerate}
	\item \textbf{The rotation axis is timelike.} After a change of coordinates, we assume that the rotation axis agrees with the $e_3$-axis. Then, any such rotation is represented by the map
	$$
	\mathcal{S}_\theta(x,y,z)=
	\left(
	\begin{matrix}
		\cos\theta&-\sin\theta&0\\
		\sin\theta&\cos\theta&0\\
		0&0&1
	\end{matrix}
	\right)
	\left(
	\begin{matrix}
	x\\y\\z
	\end{matrix}
	\right)
	$$
	\item \textbf{The rotation axis is spacelike.} After a change of coordinates, we assume that the rotation axis agrees with the $e_1$-axis. Then, any such rotation is represented by the map
	$$
	\mathcal{T}_\theta(x,y,z)=
	\left(
	\begin{matrix}
		1&0&0\\
		0&\cosh\theta&\sinh\theta\\
		0&\sinh\theta&\cosh\theta
	\end{matrix}
	\right)
	\left(
	\begin{matrix}
	x\\y\\z
	\end{matrix}
	\right)
	$$
\end{enumerate}
%The image of a single point under a rotation about a timelike axis is a hyperbola. Similarly, the image of a point under a rotation about a spacelike axis is a circle.

\section{Cylindrical $\lambda$-translators in $\l3$}\label{sec2}

%\subsection{Cylindrical surfaces}\label{subsecclasificacioncilindricas}
Given $\a\in\l3$, a cylindrical surface $\Sigma$ in the $\a$-direction is a surface invariant by the 1-parameter group of translations $T_\a$, i.e. such that $T_\a(p,t)\in\Sigma$ for every $p\in\Sigma$ and $t\in\r$. Every cylindrical surface can be suitably parametrized in terms of its \emph{base curve}, which is defined as the unique curve $\alpha(s)$ in a plane orthogonal\footnote{This orthogonallity has to be understood in the Euclidean sense} to $\a$ such that $\Sigma=T_\a(\alpha(s),t)$. As a matter of fact, a parametrization of a cylindrical surface with base curve $\alpha$ and rulings in the direction $\a$ is
$$
\psi(s,t)=\alpha(s)+t\a,\quad s\in I,t\in\r.
$$

Note that the immersion $\psi$ is flat and $2H=\kappa_\alpha$, where $\kappa_\alpha$ is the curvature of $\alpha$ as a planar curve. If we impose moreover $\Sigma$ to be a $\lambda$-translator, then $H=\langle\textbf{n}_\alpha,v\rangle+\lambda$, where $\textbf{n}_\alpha$ is a unit normal for $\alpha$. This yields a prescribed relation between the coordinates of $\alpha$ and its curvature $\kappa_\alpha$, in terms of $v$ and $\lambda$.

Next we distinguish between the causality of the ruling direction $a$. In this paper we only focus when the ruling direction $\a$ is either timelike or spacelike.
\begin{enumerate}
	
	\item[1.] If $\a$ is timelike, after a change of coordinates we assume $\a=e_3$, hence $\alpha(s)=(x(s),y(s),0)$. 
	\begin{enumerate}
		\item[1.1.] If the velocity vector $\v$ is parallel to $e_3$, then $\kappa_\alpha=2\lambda$ and we conclude that $\alpha$ is a circle. Hence $\Sigma$ is a vertical round cylinder. 
		\item[1.2.] On the contrary, after a rotation about the $e_3$-axis we assume $\v=(0,v_2,v_3),\ v_2>0$. Hence, $\langle\textbf{n}_\alpha,\v\rangle=v_2x'(s)$. If $\alpha$ is arc-length parametrized then $x'(s)=\cos\theta(s)$ and $y'(s)=\sin\theta(s)$, which yields
		$$
		\theta'=2(v_2\cos\theta(s)+\lambda).
		$$
		This ODE already appeared in both \cite{BuOr,Lop1}, where the solutions were explicitly integrated, hence details are skipped.
	\end{enumerate}

	\item[2.] If $\a$ is spacelike, after a change of coordinates we assume $\a=e_2$, hence a cylindrical surface $\Sigma$ is parametrized by
	$$
	\psi(s,t)=\alpha(s)+te_2=(x(s),t,z(s)),\qquad \textbf{n}_\alpha=(z'(s),0,x'(s)).
	$$
	At this point, we must also distinguish between the causality of $\Sigma$. 
	\begin{enumerate}
		\item[2.1.] If $\Sigma=\psi(s,t)$ is spacelike, then $\textbf{n}_\alpha$ is timelike and assuming moreover that $\alpha$ is arc-length parametrized yields $z'(s)^2-x'(s)^2=-1$, which leads to $x'(s)=\cosh\theta(s),\ z'(s)=\sinh\theta(s)$. After an ambient isometry we can also suppose $\v=(0,v_2,v_3),\ v_3>0$, hence $\langle\textbf{n}_\alpha,\v\rangle=-v_3\cosh\theta(s)$. From now on, we omit the dependence on the variable $s$. 
		
		Since the immersion $\psi$ is flat, its mean curvature $H$ and the curvature $\kappa_\alpha$ of $\alpha$ are related by the ODE 
		\begin{equation}\label{eqSS}
			\theta'=2(v_3\cosh\theta-\lambda).
		\end{equation} 
		
		\item[2.2.] If $\Sigma=\psi(s,t)$ is timelike, then $\textbf{n}_\alpha$ is spacelike, hence $z'^2-x'^2=1$ and consequently $x'=\sinh\theta,z'=\cosh\theta$. This time, the mean curvature $H$ and curvature $\kappa_\alpha$ of $\alpha$ are related by the ODE 
		\begin{equation}\label{eqST}
			\theta'=2(\lambda-v_3\sinh\theta).
		\end{equation}
	\end{enumerate}
\end{enumerate}

Now, we can derive the following results by explicit integration in \eqref{eqSS} and \eqref{eqST}, respectively, bearing in mind the usual change of variable $u=\tanh(\theta/2)$ that gives the relations $\cosh\theta=(1+u^2)/(1-u^2)$ and $\sinh\theta=2u/(1-u^2)$. 

\begin{teo}\label{thclasificacioncilindricasspacelike}
A spacelike cylindrical $\lambda$-translator along a spacelike direction can be parametrized, up to a change of coordinates, as follows:
\begin{itemize}
\item Case $\lambda>v_3$.
There is the straight line for $\theta_0$ such that $\cosh\theta_0=\lambda/v_3$, and
\begin{align*}
&\theta(s)=-2\mathrm{arctanh}\left(\sqrt{\frac{\lambda-v_3}{\lambda+v_3}}\mathrm{coth}(s\sqrt{\lambda^2-v_3^2})\right),\\
& x(s)=\frac{\lambda}{v_3} s-\frac{1}{v_3}\mathrm{arctanh}\left(\sqrt{\frac{\lambda+v_3}{\lambda-v_3}}\tanh\left(s\sqrt{\lambda^2-v_3^2}\right)\right),\\
& z(s)=-\frac{1}{2v_3}\log\left(1-\frac{v_3}{\lambda}\cosh\left(2s\sqrt{\lambda^2-v_3^2}\right)\right).
\end{align*}
See Fig. \ref{figcylindricalspacelikelambdamayorv3}.

\begin{figure}[ht]
\begin{center}
\includegraphics[width=.4\textwidth]{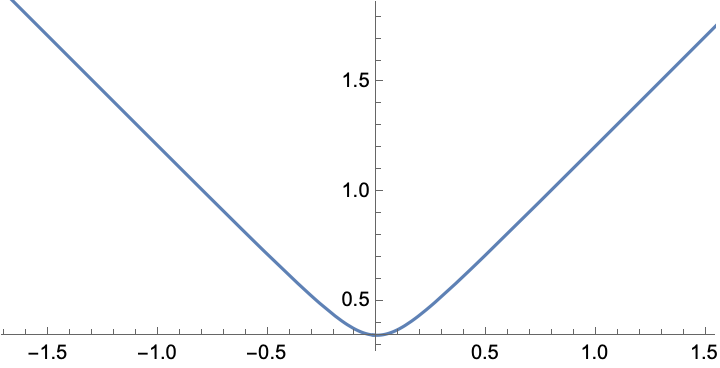}
\end{center}
\caption{The base curve of a cylindrical spacelike $\lambda$-translator for $\lambda=2$. Here, $v_3=1$.}
\label{figcylindricalspacelikelambdamayorv3}
\end{figure}

\item Case $\lambda=v_3$. 
There is the horizontal line for $\theta_0=0$, and
\begin{align*}
&\theta(s)=-2\mathrm{arccoth}(2v_3s),\\
&x(s)=s-\frac{1}{v_3}\mathrm{arctanh}(2v_3s),\\
&z(s)=-\frac{1}{2v_3}\log|1-4v_3^2s^2|.
\end{align*}
See Fig. \ref{figcylindricalspacelikelambdaigualv3}.

\begin{figure}[ht]
\begin{center}
\includegraphics[width=.4\textwidth]{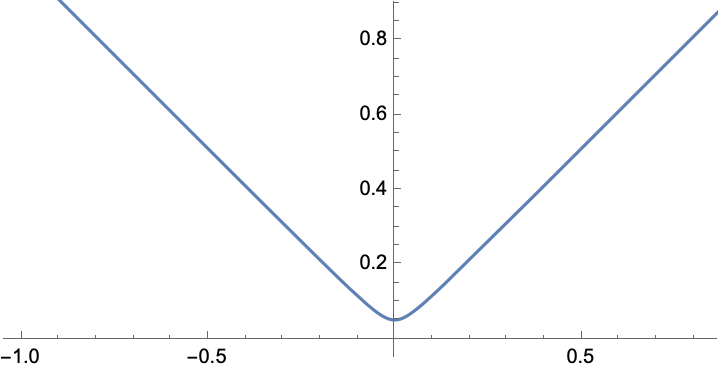}
\end{center}
\caption{The base curve of a cylindrical spacelike $\lambda$-translator for $\lambda=1$. Here, $v_3=1$.}
\label{figcylindricalspacelikelambdaigualv3}
\end{figure}

\item Case $\lambda<v_3$.
\begin{align*}
&\theta(s)=2\mathrm{arctanh}\left(\sqrt{\frac{v_3-\lambda}{v_3+\lambda}}\tan(s\sqrt{v_3^2-\lambda^2})\right),\\
&x(s)=\frac{\lambda}{v_3} s+\frac{1}{v_3}\mathrm{arctanh}\left(\sqrt{\frac{v_3-\lambda}{v_3+\lambda}}\tan\left(s\sqrt{v_3^2-\lambda^2}\right)\right),\\
&z(s)=-\frac{1}{2v_3}\log\left(1+\frac{v_3}{\lambda}\cos\left(2s\sqrt{v_3^2-\lambda^2}\right)\right).
\end{align*}
See Fig. \ref{figcylindricalspacelikelambdamenorv3}.

\begin{figure}[h]
\begin{center}
\includegraphics[width=.5\textwidth]{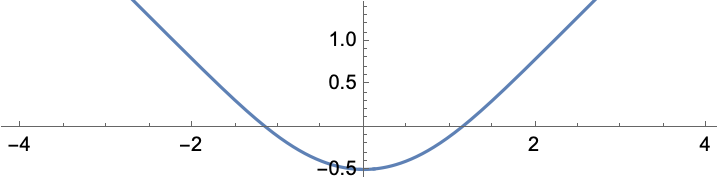}
\end{center}
\caption{The base curve of a cylindrical spacelike $\lambda$-translator for $\lambda=0'6$. Here, $v_3=1$.}
\label{figcylindricalspacelikelambdamenorv3}
\end{figure}

\end{itemize}
\end{teo}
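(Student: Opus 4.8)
The plan is to integrate the ODE \eqref{eqSS} explicitly and then recover the profile curve from the relations $x'=\cosh\theta$, $z'=\sinh\theta$ obtained in case 2.1. The first and decisive step is to apply the substitution $u=\tanh(\theta/2)$ announced just before the statement. Since $du/d\theta=\tfrac12(1-u^2)$ gives $\theta'=2u'/(1-u^2)$, and $\cosh\theta=(1+u^2)/(1-u^2)$, equation \eqref{eqSS} collapses to the autonomous separable ODE
$$u'=(v_3+\lambda)u^2+(v_3-\lambda).$$
Because $v_3+\lambda>0$ always, both the qualitative behaviour and the functional form of the solution are governed entirely by the sign of the constant term $v_3-\lambda$, which is exactly the trichotomy $\lambda>v_3$, $\lambda=v_3$, $\lambda<v_3$ appearing in the statement.

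The second step is to solve this reduced equation in each of the three regimes. When $\lambda>v_3$ the right-hand side has the two real zeros $u=\pm c$ with $c=\sqrt{(\lambda-v_3)/(\lambda+v_3)}$; the constant solutions $u\equiv\pm c$ correspond to constant $\theta=\theta_0$ with $\cosh\theta_0=\lambda/v_3$, producing the straight line, while separating variables and integrating $\int du/\bigl((v_3+\lambda)(u^2-c^2)\bigr)$ on the complementary branch yields a hyperbolic cotangent, whence $u=-c\coth(s\sqrt{\lambda^2-v_3^2})$ and $\theta=2\,\mathrm{arctanh}(u)$. When $\lambda=v_3$ the equation degenerates to $u'=2v_3u^2$, integrated at once to $u=-1/(2v_3 s)$, with equilibrium $u\equiv0$ (that is $\theta_0=0$) giving the horizontal line. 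When $\lambda<v_3$ there is no real equilibrium, and integrating $\int du/\bigl((v_3+\lambda)(u^2+d^2)\bigr)$ with $d=\sqrt{(v_3-\lambda)/(v_3+\lambda)}$ produces an arctangent, so that $u=d\tan(s\sqrt{v_3^2-\lambda^2})$; this also explains why no straight line occurs in this last regime.

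The third step is to recover $x$ and $z$. For the $x$-coordinate I would avoid a brute-force computation by reading off from \eqref{eqSS} the identity $\cosh\theta=\lambda/v_3+\theta'/(2v_3)$, so that integration gives $x(s)=(\lambda/v_3)s+\theta(s)/(2v_3)+\mathrm{const}$; rewriting $\theta$ through the $\mathrm{arctanh}$/$\mathrm{arccoth}$ identities then reproduces the displayed expressions up to an additive constant. For the $z$-coordinate, substituting the explicit $u(s)$ into $z'=2u/(1-u^2)$ turns the integrand into a logarithmic derivative; for instance in the case $\lambda>v_3$ one gets $z'=k\sinh(2ks)/(\lambda-v_3\cosh(2ks))$ with $k=\sqrt{\lambda^2-v_3^2}$, which integrates immediately via $w=\lambda-v_3\cosh(2ks)$ to the stated logarithm, and the remaining two cases are identical after replacing the hyperbolic functions by their trigonometric analogues.

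The main obstacle I anticipate is bookkeeping rather than anything conceptual: choosing the branch of each solution of the quadratic-right-hand-side equation so that $u$ remains in the range where $\theta=2\,\mathrm{arctanh}(u)$ is real, tracking the absolute values inside the logarithms defining $z$, and exploiting the freedom to translate the parameter $s$ (an ambient isometry, hence harmless) in order to normalize the integration constants to exactly the displayed forms. The one genuinely structural point is identifying the equilibrium solutions of the reduced ODE with the straight lines, and noticing that such equilibria exist precisely when $\lambda\ge v_3$, which accounts for their absence in the case $\lambda<v_3$.
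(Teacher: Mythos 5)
Your proposal is correct and follows exactly the route the paper intends: the paper's ``proof'' is precisely the remark preceding the theorem, namely to integrate \eqref{eqSS} explicitly after the substitution $u=\tanh(\theta/2)$, which you carry out in full (reduction to $u'=(v_3+\lambda)u^2+(v_3-\lambda)$, trichotomy on the sign of $v_3-\lambda$, equilibria giving the straight lines, and recovery of $x$ via $\cosh\theta=\lambda/v_3+\theta'/(2v_3)$ and of $z$ via a logarithmic derivative). Your write-up actually supplies more detail than the paper, and the steps all check out.
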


For timelike $\lambda$-translators, we obtain the following.
\begin{teo}
A timelike cylindrical $\lambda$-translator along a spacelike direction can be parametrized, up to a change of coordinates, as follows:

There is the straight line for $\theta_0$ such that $\sinh\theta_0=\lambda/v_3$, and

\begin{itemize}
\item If $\lambda\neq v_3$, then
\begin{align*}
&\theta(s)=2\mathrm{arctanh}\left(\frac{-v_3+\sqrt{v_3^2+\lambda^2}\tanh(s\sqrt{v_3^2+\lambda^2})}{\lambda}\right),\\
&x(s)=\frac{\lambda}{v_3} s+\frac{1}{2v_3}\log\frac{1-\displaystyle{\frac{\sqrt{\lambda^2+v_3^2}\tanh(s\sqrt{\lambda^2+v_3^2)}}{v_3+\lambda}}}{1+\displaystyle{\frac{\sqrt{\lambda^2+v_3^2}\tanh(s\sqrt{\lambda^2+v_3^2)}}{\lambda-v_3}}},\\
&z(s)=\frac{1}{2v_3}\log\left(\frac{\lambda(v_3^2+\lambda^2)}{\lambda^2-v_3\left(v_3\cosh(2s\sqrt{v_3^2+\lambda^2})+\sqrt{v_3^2+\lambda^2}\sinh(2s\sqrt{v_3^2+\lambda^2})\right)}\right).
\end{align*}
See Fig. \ref{figcylindricaltimelike}, left.

\item If $\lambda=v_3$, then
\begin{align*}
&\theta(s)=-2\mathrm{arctanh}\left(1-\sqrt{2}\tanh(\sqrt{2}\lambda s)\right),\\
&x(s)=s+\frac{1}{\lambda}\mathrm{arctanh}\left(1-\sqrt{2}\tanh(\sqrt{2}\lambda s)\right),\\
&z(s)=\frac{1}{\lambda}\left(-\mathrm{arctanh}(1-2\tanh(\sqrt{2}\lambda s)+\mathrm{arctanh}\left((-1+\sqrt{2})(-1+\sqrt{2}-2\tanh(\sqrt{2}\lambda s)\right)\right).
\end{align*}
See Fig. \ref{figcylindricaltimelike}, right.
\end{itemize}
\end{teo}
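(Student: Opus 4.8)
The plan is to integrate the autonomous ODE \eqref{eqST}, i.e. $\theta'=2(\lambda-v_3\sinh\theta)$, coupled with the relations $x'=\sinh\theta$ and $z'=\cosh\theta$ coming from case 2.2. First I would isolate the constant solution: since $\lambda,v_3>0$ and $\sinh$ is a bijection of $\r$, there is a unique $\theta_0$ with $\sinh\theta_0=\lambda/v_3$, and $\theta\equiv\theta_0$ annihilates the right-hand side. For this solution $x'$ and $z'$ are constant, so $\alpha$ is an affine line and $\Sigma$ a timelike plane; this produces the straight line recorded in the statement, and one checks that $\theta_0$ is exactly the asymptotic direction of every non-constant solution.

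For the non-constant solutions I would separate variables and apply the substitution $u=\tanh(\theta/2)$ announced before the theorem, for which $\sinh\theta=2u/(1-u^2)$ and $d\theta=2(1-u^2)^{-1}du$. This turns $ds=\tfrac12\,d\theta/(\lambda-v_3\sinh\theta)$ into the rational form $ds=-du/(\lambda u^2+2v_3u-\lambda)$. The key structural observation is that, in contrast with the spacelike case governed by \eqref{eqSS}, the quadratic $\lambda u^2+2v_3u-\lambda$ has discriminant $4(v_3^2+\lambda^2)>0$ for all admissible $\lambda,v_3$, hence always two distinct real roots $u_\pm=(-v_3\pm\sqrt{v_3^2+\lambda^2})/\lambda$ with $u_+u_-=-1$. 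Completing the square then gives $s=(v_3^2+\lambda^2)^{-1/2}\,\mathrm{arctanh}\big((\lambda u+v_3)(v_3^2+\lambda^2)^{-1/2}\big)$ up to an additive constant; solving for $u$ and using $\theta=2\,\mathrm{arctanh}(u)$ yields the displayed $\theta(s)$.

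To recover $\alpha=(x,z)$ I would avoid integrating $\sinh\theta$ and $\cosh\theta$ head-on and instead exploit the ODE itself. From $\theta'=2\lambda-2v_3\sinh\theta$ one gets $x'=\sinh\theta=\lambda/v_3-\theta'/(2v_3)$, which integrates immediately to $x(s)=\tfrac{\lambda}{v_3}s-\tfrac{1}{2v_3}\theta(s)+\mathrm{const}$; rewriting $\theta=2\,\mathrm{arctanh}(u)$ as a logarithm reproduces the stated expression for $x$. For the third coordinate I would use the logarithmic-derivative identity $\tfrac{d}{ds}\log(\lambda-v_3\sinh\theta)=-v_3\cosh\theta\,\theta'/(\lambda-v_3\sinh\theta)=-2v_3\cosh\theta=-2v_3z'$, so that $z(s)=-\tfrac{1}{2v_3}\log(\lambda-v_3\sinh\theta)+\mathrm{const}$; substituting $\sinh\theta$ as a function of $s$ and simplifying the resulting hyperbolic expression through the double-angle formulas gives the stated $z(s)$.

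The main obstacle is bookkeeping rather than conceptual. The closed forms for $x$ and $z$ carry factors $\lambda\pm v_3$ in their denominators, inherited from the partial-fraction constants built on $u_\pm$, so the generic formula degenerates precisely at $\lambda=v_3$, which forces the separate case in the statement; there the same computation must be redone after the offending factor is cancelled, producing the alternative expressions with $\sqrt2$. Additional care is needed in fixing the signs and the integration constants (equivalently, the base point $s=0$ and the solution branch), since the admissible range of $u$ is dictated by $u_\pm$ and the replacement $s\mapsto-s$ flips the sign of the $\sinh(2s\sqrt{v_3^2+\lambda^2})$ term; matching these conventions to the normalization used in the statement is the most error-prone part of the argument.
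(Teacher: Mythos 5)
Your proposal is correct and follows the same route the paper indicates: the theorem is obtained by explicit integration of \eqref{eqST} via the substitution $u=\tanh(\theta/2)$, which is exactly what you carry out, with the constant solution $\sinh\theta_0=\lambda/v_3$ giving the straight line and the degenerate factor $\lambda-v_3$ forcing the separate case $\lambda=v_3$. Your shortcuts for recovering $x$ (writing $\sinh\theta=\lambda/v_3-\theta'/(2v_3)$) and $z$ (the logarithmic derivative of $\lambda-v_3\sinh\theta$) are sound and reproduce the stated formulas up to the integration constants and branch/sign conventions you already flag.
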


\begin{figure}[h]
\begin{center}
\includegraphics[width=.2\textwidth]{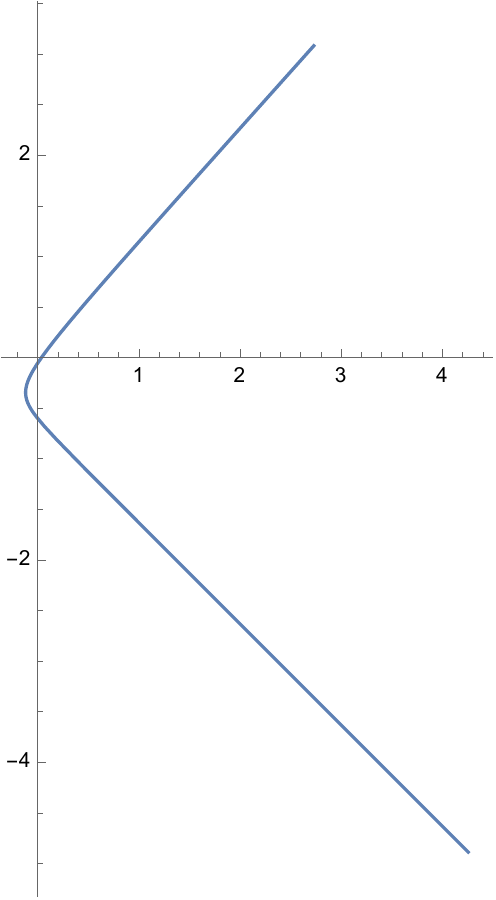}\hspace{1cm}
\includegraphics[width=.15\textwidth]{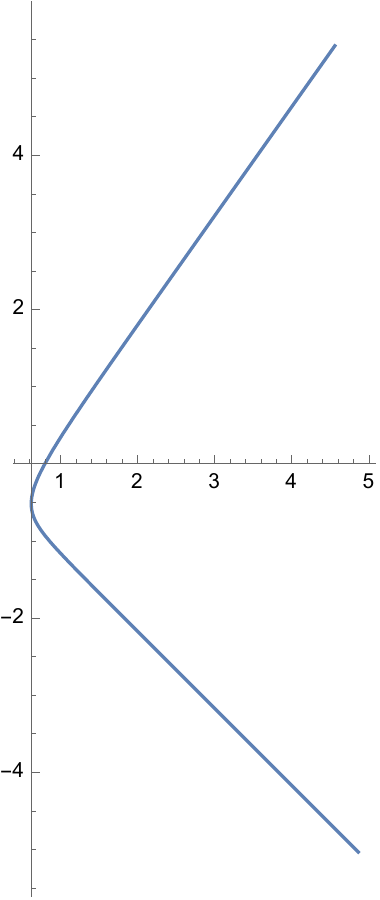}
\end{center}
\caption{Left: the base curve of a cylindrical timelike $\lambda$-translator for $\lambda=2$. Right: for $\lambda=1$. In both cases, $v_3=1$.}
\label{figcylindricaltimelike}
\end{figure}
\section{Rotational $\lambda$-translators}\label{secclasificacionrotacionales}
In this section, we classify rotational surfaces in $\l3$ about a timelike and spacelike axis that are $\lambda$-translators. When $\lambda=0$, i.e. for translating solitons of the mean curvature flow, this classification result was achieved in \cite{Kim, LaOr}. When fixing a causality for the rotation axis, we have to also take into account the causality of the surface, facing four different scenarios.

We remark that in the previous section, where we have classified cylindrical $\lambda$-translators, we have not assumed an a priori relation between the velocity vector $\v$ and the ruling direction $\a$. However, when approaching the study of rotational $\lambda$-translators we prove that $\v$ and the rotation axis must be parallel.

\begin{pro}\label{propejeparalelodensity}
Let $\Sigma$ be a $\lambda$-translator with velocity vector $\v$ that is rotational about some axis $L$. Then, $\v$ and $L$ must be parallel, or $\Sigma$ is a plane orthogonal to $L$.
\end{pro}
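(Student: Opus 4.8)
The plan is to exploit the rotational invariance of $\Sigma$ directly at the level of the $\lambda$-translator equation \eqref{eqdeflambdatranslator}, rather than to write down a profile curve. Denote by $R_\theta$ the one-parameter group of rotations about $L$ (so $R_\theta=\mathcal{S}_\theta$ if $L$ is timelike and $R_\theta=\mathcal{T}_\theta$ if $L$ is spacelike); each $R_\theta$ is an ambient isometry fixing $L$, and by hypothesis $R_\theta(\Sigma)=\Sigma$. Since the $R_\theta$ form a connected group containing the identity and $\Sigma$ is connected, the unit normal is equivariant, $N(R_\theta(p))=R_\theta(N(p))$, and the mean curvature is invariant, $H(R_\theta(p))=H(p)$.

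First I would evaluate \eqref{eqdeflambdatranslator} at the point $R_\theta(p)$ and subtract the same equation at $p$. Using $H(R_\theta(p))=H(p)$, the equivariance of $N$, and the fact that $R_\theta$ preserves $\langle\cdot,\cdot\rangle$ (so that $\langle R_\theta u,w\rangle=\langle u,R_{-\theta}w\rangle$), everything collapses to the single identity
$$\langle N(p),\,R_{-\theta}\v-\v\rangle=0\qquad\text{for all }p\in\Sigma,\ \theta\in\r.$$
Writing $\v=\v_L+\v_\perp$ with $\v_L$ along $L$ and $\v_\perp$ in the rotation plane $\Pi=L^{\perp}$, and using $R_{-\theta}\v_L=\v_L$, this becomes $\langle N(p),R_{-\theta}\v_\perp-\v_\perp\rangle=0$. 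If $\v_\perp=0$ we are done, since then $\v=\v_L$ is parallel to $L$.

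Assume therefore $\v_\perp\neq0$. The heart of the matter is to understand the linear span of the chord vectors $\{R_{-\theta}\v_\perp-\v_\perp:\theta\in\r\}\subset\Pi$. When $L$ is timelike, $\Pi$ is spacelike and the orbit $\{R_{-\theta}\v_\perp\}$ is an ordinary circle, whose chords span all of $\Pi$; when $L$ is spacelike and $\v_\perp$ is non-null, the orbit is a genuine, non-degenerate hyperbola, whose chords again span $\Pi$. In both situations $N(p)\perp\Pi$ for every $p$, hence $N(p)\in\Pi^{\perp}=\r L$; being a unit field on a connected surface, $N$ must equal the constant $\pm L/|L|$, so $\Sigma$ is a plane orthogonal to $L$.

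The remaining, and genuinely delicate, case is a spacelike axis with $\v_\perp$ lightlike, for then $\{R_{-\theta}\v_\perp\}$ is merely a null ray and its chords span only $\r\v_\perp$, yielding nothing beyond $\langle N(p),\v_\perp\rangle=0$ for all $p$. Here I would fall back on the explicit shape of a surface invariant under the boost $\mathcal{T}_\theta$ about $e_1$: such a surface is locally $\{y^2-z^2=k(x)\}$, with Lorentzian normal proportional to $(-k'(x),2y,2z)$. Imposing $\langle N,\v_\perp\rangle=0$ with $\v_\perp$ a multiple of the null vector $(0,1,1)$ forces $y=z$ at every point, whence $k\equiv0$ and $\Sigma$ lies in the null plane $\{y=z\}$. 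As a null plane is neither spacelike nor timelike, it is excluded from our setting and this branch is vacuous. Collecting the cases gives the stated dichotomy. I expect this lightlike branch to be the main obstacle, since it is the only place where the clean mechanism ``chords span $\Pi$'' breaks down and one is forced to invoke the concrete normal form of boost-invariant surfaces to rule it out.
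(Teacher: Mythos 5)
Your argument is correct and reaches the stated dichotomy, but it takes a genuinely different route from the paper's. The paper parametrizes $\Sigma$ by its profile curve, computes $\langle N,\v\rangle$ explicitly, and observes that the left-hand side of \eqref{eqdeflambdatranslator} is independent of the orbit parameter $t$ while the right-hand side contains the linearly independent functions $\cos t,\sin t$ (resp. $1,\cosh t,\sinh t$); forcing their coefficients to vanish gives $v_1\sinh\theta=v_2\sinh\theta=0$ and hence the dichotomy. You instead work invariantly: equivariance of $N$ and invariance of $H$ under the rotation group yield $\langle N(p),R_{-\theta}\v-\v\rangle=0$, and the conclusion follows from the span of the chord set (which one can confirm by differentiating $\theta\mapsto R_{-\theta}\v_\perp$ twice at $\theta=0$). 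This is cleaner and coordinate-free, and it isolates exactly why the spacelike axis is more delicate: the boost orbit of a lightlike $\v_\perp$ is a null ray, so its chords span only a line, whereas the paper's linear independence of $1,\cosh t,\sinh t$ absorbs this case with no extra work (if $v_3=\pm v_2\neq0$ the angular part of $\langle N,\v\rangle$ is $x'v_2e^{\mp t}$, still nonconstant). One small correction: your lightlike branch is not vacuous. A boost-invariant surface need not have the normal form $\{y^2-z^2=k(x)\}$; it can also be an open piece of a plane $\{x=x_0\}$, a union of orbits with $x$ constant, which is a timelike $\lambda$-translator whenever $v_1=-\lambda$, for any lightlike $\v_\perp$. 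That surface satisfies $\langle N,\v_\perp\rangle=0$ trivially and is precisely a plane orthogonal to $L$, so the proposition still holds on this branch --- but it lands in the second alternative of the statement rather than being excluded. With that adjustment the proof is complete.
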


\begin{proof}
We only exhibit the proof when $L$ is timelike, since the proof in the spacelike case is similar. A rotational $\lambda$-translator about the $e_3$-axis is locally parametrized as
$$
\psi(s,t)=(x(s)\cos t,x(s)\sin t,z(s)),\quad s\in I, t\in\r,
$$
where $\alpha(s)=(x(s),0,z(s))$ is the profile curve, which is supposed to be arc-length parametrized. Hence $z'(s)^2-x'(s)^2=\epsilon$, where $\epsilon=-1$ if $\Sigma$ is spacelike and $\epsilon=1$ if $\Sigma$ is timelike. Assume that $\epsilon=-1$ since the opposite case follows the same. Consequently, $x'(s)=\cosh\theta(s)$ and $z'(s)=\sinh\theta(s)$ for some function $\theta$. Then, Eq. \eqref{eqdeflambdatranslator} yields
$$
-\theta'(s)-\frac{\sinh\theta(s)}{x(s)}=2\lambda+2v_1\sinh\theta(s)\sin t+2v_2\sinh\theta(s)\cos t-2v_3\cosh\theta(s).
$$
By the linear independence of the functions $\cos t,\sin t$, we conclude
$$
v_1\sinh\theta(s)=v_2\sinh\theta(s)=0
$$
for all $s\in I$. Now, if $\theta(s_0)\neq0$ for some $s_0\in I$ then $v_1=v_2=0$ and $v$ is parallel to $e_3$. If $\sinh\theta(s)$ is identically zero then $\theta(s)$ vanishes, $\alpha(s)$ is a horizontal line and $\Sigma$ is a horizontal plane, which is a $\lambda$-translator for the value $\lambda=v_3$.

The proof when $L$ is spacelike is similar, but this time concluding with the linear independence of the functions $\cosh t$ and $\sinh t$. Details are skipped.
\end{proof}

As a consequence, when rotating about a timelike axis we will always assume that the axis $L$ is the one generated by the vector $e_3$, hence $\v=e_3$. On the contrary, if the rotation axis is spacelike we will assume $L$ to be generated by $e_1$, hence $\v=e_1$.

\subsection{Existence of radial $\lambda$-translators}\label{subsectexistenceradial}

One of our interests is the study of rotational $\lambda$-translators with a particular geometric feature, as for example when the surface meets orthogonally the rotation axis. Recall that in the proof of Prop. \ref{propejeparalelodensity} we parametrized a spacelike, rotational $\lambda$-translator about the $e_3$-axis in terms of the coordinates of its profile curve, exhibiting that it is a solution to the ODE system
\begin{equation}\label{eqsys}
\left\lbrace
\begin{array}{l}
x'(s)=\cosh\theta(s),\\
z'(s)=\sinh\theta(s),\\
\theta'(s)=2(\cosh\theta(s)-\lambda)-\displaystyle{\frac{\sinh\theta(s)}{x(s)}}.
\end{array}\right.
\end{equation}
Conversely, every solution to this system corresponds to the profile curve of a spacelike, rotational $\lambda$-translator about the $e_3$-axis. 

The standard theory yields existence and uniqueness of \eqref{eqsys} when $x(s_0)>0$. Nonetheless, if $x(s_0)=0$ the third equation presents a singularity and thus the existence is not a direct consequence of the corresponding Cauchy problem. The objective of this section is to study this case when also $\theta(s_0)=0$, which would correspond to a rotational $\lambda$-translator intersecting orthogonally the $e_3$-axis.

We start with the case that the axis of rotation is timelike, which is assumed to be the $e_3$-axis after a change of coordinates. If we seek an orthogonal intersection with the $e_3$-axis, then it only makes sense to study spacelike $\lambda$-translators. Locally, a solution to \eqref{eqsys} can be parametrized as a \emph{radial} surface, which is described by a function $u:[0,R]\rightarrow\r$ and the parametrization
$$
\psi(r,\theta)=(r\cos\theta,r\sin\theta,u(r)),\quad r\in[0,R],\ \theta\in\r.
$$
A unit normal is
$$
N=\frac{1}{\sqrt{1-u'(r)^2}}(u'(r)\cos\theta,u'(r)\sin\theta,1),
$$
%Bear in mind that $N$ is timelike, hence $1-u'(r)^2>0$ and $N$ is well-defined. The angle function is
%$$
%\langle N,e_3\rangle=-\frac{1}{\sqrt{1-u'(r)^2}},
%$$
%therefore the mean curvature satisfies
%$$
%H=-\frac{1}{\sqrt{1-u'(r)^2}}+\lambda.
%$$
while a straightforward computation yields
\begin{equation}\label{eqHradial}
2H=-\frac{u''(r)}{(1-u'(r)^2)^{3/2}}-\frac{u'(r)}{r\sqrt{1-u'(r)^2}},
\end{equation}
hence any radial $\lambda$-translator is a solution to
$$
-\frac{u''(r)}{(1-u'(r)^2)^{3/2}}-\frac{u'(r)}{r\sqrt{1-u'(r)^2}}=2\left(-\frac{1}{\sqrt{1-u'(r)^2}}+\lambda\right).
$$
The existence of radial solutions to \eqref{eqsys} will follow from the following result.
\begin{teo}\label{thexistenciaradial}
There exists some $R>0$ such that there exists a unique solution to the initial value problem
\begin{equation}\label{dirichletradial}
\left\lbrace
\begin{array}{lll}
\displaystyle{\frac{u''(r)}{(1-u'(r)^2)^{3/2}}+\frac{u'(r)}{r\sqrt{1-u'(r)^2}}=2\left(\frac{1}{\sqrt{1-u'(r)^2}}-\lambda\right)}& \mathrm{in}&(0,R),\\
u(0)=u'(0)=0.
\end{array}
\right.
\end{equation}
In particular, there exists a unique spacelike $\lambda$-translator intersecting orthogonally the $e_3$-axis.
\end{teo}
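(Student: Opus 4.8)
The plan is to eliminate the singularity at $r=0$ by rewriting the equation in divergence form and then reducing it to a non-singular first order problem. The starting observation is the algebraic identity
$$
\frac{u''(r)}{(1-u'(r)^2)^{3/2}}+\frac{u'(r)}{r\sqrt{1-u'(r)^2}}=\frac{1}{r}\frac{d}{dr}\left(\frac{r\,u'(r)}{\sqrt{1-u'(r)^2}}\right),
$$
which I would check by a direct differentiation. Introducing the new unknown $Q(r):=r\,u'(r)/\sqrt{1-u'(r)^2}$, the left-hand side of \eqref{dirichletradial} becomes $\frac1r Q'(r)$, so the equation reads $\frac1r Q'(r)=2\big((1-u'^2)^{-1/2}-\lambda\big)$. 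Since the map $p\mapsto p/\sqrt{1-p^2}$ is an increasing bijection of $(-1,1)$ onto $\r$, the relation $Q/r=u'/\sqrt{1-u'^2}$ can be inverted to give $u'(r)=Q/\sqrt{r^2+Q^2}$ and $(1-u'^2)^{-1/2}=\sqrt{r^2+Q^2}/r$. Substituting this back turns \eqref{dirichletradial} into the equivalent first order initial value problem
$$
Q'(r)=2\sqrt{r^2+Q(r)^2}-2\lambda r,\qquad Q(0)=0.
$$

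Next I would observe that the right-hand side $F(r,Q)=2\sqrt{r^2+Q^2}-2\lambda r$ is continuous on $[0,\infty)\times\r$ and, crucially, globally Lipschitz in $Q$ with constant $2$, because $|\sqrt{r^2+Q_1^2}-\sqrt{r^2+Q_2^2}|\le|Q_1-Q_2|$; in particular there is no longer any singular coefficient. Hence the classical Cauchy--Lipschitz (Picard--Lindel\"of) theorem applies verbatim and provides, on some interval $[0,R]$, a unique $C^1$ solution $Q$ of this problem. This is precisely the step where the difficulty of the original formulation, the singular term $u'/(r\sqrt{1-u'^2})$, is resolved: it has been absorbed into the smooth, Lipschitz quantity $\sqrt{r^2+Q^2}$, and the whole argument becomes standard ODE theory.

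It then remains to transfer the conclusion back to $u$. From the solution $Q$ I define $u'(r)=Q(r)/\sqrt{r^2+Q(r)^2}$ for $r\in(0,R]$. Since $Q$ is $C^1$ with $Q(0)=0$, evaluating the equation gives $Q'(0)=2|Q(0)|=0$, so $Q(r)/r\to 0$ and therefore $u'(r)\to 0$ as $r\to 0^+$; this makes $u'$ continuous on $[0,R]$ with $u'(0)=0$ and, automatically, $|u'(r)|<1$ for $r>0$, i.e. the resulting surface is spacelike. Setting $u(r)=\int_0^r u'(\rho)\,d\rho$ produces a solution of \eqref{dirichletradial} satisfying $u(0)=u'(0)=0$. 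Conversely, reversing the bijection shows that any solution of \eqref{dirichletradial} yields a solution of the $Q$-problem, so uniqueness of $Q$ forces uniqueness of $u$. Finally, the statement about the rotational $\lambda$-translator follows because, by the discussion preceding the theorem, such a radial $u$ is exactly the profile of a spacelike rotational $\lambda$-translator, and the condition $u'(0)=0$ encodes the orthogonal intersection with the $e_3$-axis. The main obstacle throughout is the $r=0$ singularity, and everything hinges on the divergence-form identity together with the substitution $Q=r\,u'/\sqrt{1-u'^2}$, after which the problem is entirely routine.
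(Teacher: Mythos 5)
Your argument is correct, and it takes a genuinely different route from the paper's. Both proofs start from the same divergence-form identity, but the paper keeps $u$ as the unknown: it integrates twice to obtain the fixed-point equation $u=\mathsf{T}u$ with $(\mathsf{T}u)(r)=\int_0^rf^{-1}\bigl(\frac{2}{t}\int_0^t s\phi(u'(s))ds\bigr)dt$, and then carries out smallness estimates (self-mapping of a ball, contraction constants built from the Lipschitz constants of $f^{-1}$ and $\phi$) to invoke the Banach fixed point theorem on $C^1([0,R])$. You instead promote $Q=ru'/\sqrt{1-u'^2}$ to the unknown, which converts the problem into the scalar, non-singular IVP $Q'=2\sqrt{r^2+Q^2}-2\lambda r$, $Q(0)=0$, whose right-hand side is continuous and globally $2$-Lipschitz in $Q$; Picard--Lindel\"of then applies verbatim. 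Your reduction is more elementary and arguably sharper: the global Lipschitz bound yields existence and uniqueness of $Q$ on any interval $[0,R]$, not just a small one, and the formula $u'=Q/\sqrt{r^2+Q^2}$ makes the spacelike condition $|u'|<1$ automatic, whereas the paper must shrink $R$ to keep $\mathsf{T}$ a contraction. The trade-off is that the paper's functional-analytic scheme does not depend on the availability of such a clean inversion $(1-u'^2)^{-1/2}=\sqrt{r^2+Q^2}/r$ and so adapts more readily to other prescribed-curvature operators. Two small points worth making explicit in your write-up: in the converse direction you should note that a solution $u$ of \eqref{dirichletradial} produces a $Q$ that is continuous at $0$ with $Q(0)=0$ and hence satisfies the integral equation $Q(r)=\int_0^rF(t,Q(t))\,dt$, so that Picard--Lindel\"of uniqueness really does apply to it; and, if one wants the $C^2$-extension at the origin recorded in the paper ($u''(0)=(1-\lambda)/2$), it follows from $Q'(0)=0$ together with one more application of L'H\^opital, which your argument delivers but does not state.
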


\begin{proof}
After multiplying Eq. \eqref{eqHradial} by $r$, we obtain
$$
-2Hr=\frac{ru''(r)}{(1-u'(r)^2)^{3/2}}+\frac{u'(r)}{\sqrt{1-u'(r)^2}}=\left(\frac{r u'(r)}{\sqrt{1-u'(r)^2}}\right)'.
$$
Let us define
\begin{align*}
\phi(y)=\frac{1}{\sqrt{1-y^2}}-\lambda, \qquad f(y)=\frac{y}{\sqrt{1-y^2}},
\end{align*}
and recall that the inverse of $f$ is $f^{-1}(y)=y/\sqrt{1+y^2}$. Therefore, we rewrite \eqref{dirichletradial} as
$$
(rf(u'(r))'=2r\phi(u'(r)),
$$
and solving $u(r)$ we arrive to
$$
u(r)=\int_0^rf^{-1}\left(\frac{2}{t}\int_0^t s\phi(u'(s))ds\right)dt.
$$
This motivates the definition of the functional $\mathsf{T}:(C^1([0,R]),||.||)\rightarrow(C^1([0,R]),||.||)$, where we consider the usual norm $||u||=||u||_\infty+||u'||_\infty$, as follows
$$
(\mathsf{T}u)(r)=\int_0^rf^{-1}\left(\frac{2}{t}\int_0^t s\phi(u'(s))ds\right)dt.
$$
Let us point out that $u$ solves Eq. \eqref{dirichletradial} if and only if $u$ is a fixed point of $\mathsf{T}$. Our objective is to determine $R$ small enough such that $\mathsf{T}$ is a contraction in the closed ball $\overline{\mathcal{B}(0,R)}\subset(C^1([0,R]),||.||)$. Then, we conclude by applying the Banach fixed point theorem to conclude the existence of a fixed point of $\mathsf{T}$, hence of a solution to \eqref{dirichletradial}.

First, we prove that $\mathsf{T}$ is a self-map in a closed ball $\overline{\mathcal{B}(0,\epsilon)}$ for some $\epsilon$, i.e. $\mathsf{T}(\overline{\mathcal{B}(0,\epsilon)})\subset\overline{\mathcal{B}(0,\epsilon)}$. Let $u\in\overline{\mathcal{B}(0,\epsilon)}$. By taking $\epsilon<1/2$, the L'Hôpital rule yields
$$
\lim_{t\rightarrow0}\frac{2}{t}\int_0^ts\phi(u'(s))ds=0,
$$
since $||u||<\epsilon$ and in particular $u'(r)<1$. Then,
\begin{align*}
|(\mathsf{T}u)(r)|&\leq\int_0^r|f^{-1}\left(\frac{2}{t}\int_0^t s\phi(u'(s))ds\right)dt|\\
& \leq L_{f^{-1}}\int_0^r\frac{2}{t}\int_0^t |s\phi(u'(s))|dsdt\\
& \leq L_{f^{-1}}|\phi(0)|\int_0^r\frac{2}{t}\int_0^ts\,dsdt\\
& =L_{f^{-1}}|\phi(0)|\frac{r^2}{2}\leq L_{f^{-1}}|\phi(0)|\frac{R^2}{2},
\end{align*}
where $L_{f^{-1}}$ stands for the Lipschitz constants of $f^{-1}$. We can bound $|(\mathsf{T}u)'(r)|$ in a similar fashion, obtaining $|(\mathsf{T}u)'(r)|\leq L_{f^{-1}}\phi(0)R$. Therefore, by choosing $R$ small enough such that
$$
L_{f^{-1}}|\phi(0)|R\left(\frac{R}{2}+1\right)<\epsilon,
$$
we conclude that $\mathsf{T}(\overline{\mathcal{B}(0,\epsilon)})\subset\overline{\mathcal{B}(0,\epsilon)}$. Next, we prove that $\mathsf{T}$ is a contraction.
\begin{align*}
 |(\mathsf{T}u)(r)-(\mathsf{T}v)(r)| &\leq \int_0^r\Bigg|f^{-1}\left(\frac{2}{t}\int_0^t s\phi(u'(s))ds\right)-f^{-1}\left(\frac{2}{t}\int_0^t s\phi(v'(s))ds\right)\Bigg|dt\\
& \leq  2 L_{f^{-1}}\int_0^r\frac{1}{t}\left(\int_0^ts|\phi(u'(s))-\phi(v'(s))|ds\right)dt\\
& \leq 2 L_{f^{-1}}\int_0^r\frac{L_\phi}{t}\left(\int_0^ts|u'(s)-v'(s)|ds\right)dt\\
& \leq 2 L_{f^{-1}}L_\phi||u-v||\int_0^r\frac{1}{t}\left(\int_0^t sds\right)dt\\
& = L_{f^{-1}}L_\phi||u-v||\frac{r^2}{2}.
\end{align*}
Hence,
$$
||Tu-Tv||_\infty\leq L_{f^{-1}}L_\phi||u-v||\frac{r^2}{2}.
$$
Let $R_1$ be small enough such that $K_1=L_{f^{-1}}L_\phi\frac{r^2}{2}<\frac{1}{2}$ for every $r<R_1$. A similar argument allows us to bound $ |(\mathsf{T}u)'(r)-(\mathsf{T}v)'(r)|$ by
$$
|(\mathsf{T}u)'(r)-(\mathsf{T}v)'(r)|\leq L_{f^{-1}}L_\phi||u-v||r.
$$
Therefore,
$$
||(Tu)'-(Tv)'||_\infty\leq L_{f^{-1}}L_\phi||u-v||r.
$$
Let $R_2$ be small enough such that $K_2=L_{f^{-1}}L_\phi r<\frac{1}{2}$. We define $R_0=\min\{R_1,R_2\}$. Consequently, for $r\in[0,R_0]$ the following bound holds
$$
||Tu-Tv||\leq K_1||u-v|+K_2||u-v||<||u-v||,
$$
which implies that $\mathsf{T}$ is a contraction. Finally, we prove that $u$ extends with $C^2$-regularity at $r=0$. Taking limits as $r\rightarrow0$ and by the L'Hôpital rule, $-1+\lambda=-2u''(0)$, which yields $u''(0)=\frac{1-\lambda}{2}$. 
\end{proof}

If the rotation axis is spacelike, for instance the $e_1$-axis, a radial surface is parametrized by
$$
\psi(r,t)=(u(r),r\sinh t,r\cosh t).
$$
A similar computation as in the timelike case yields the following equation involving the mean curvature
$$
2H=-\frac{u'}{r\sqrt{1-u'^2}}-\frac{u''}{(1-u'^2)^{3/2}}.
$$
The result obtained and the computations involved are the same, hence they are skipped.
\begin{cor}\label{corexistenciaradial}
There exists a unique timelike $\lambda$-translator intersecting orthogonally the $e_1$-axis.
\end{cor}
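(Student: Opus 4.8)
The plan is to reduce the statement to Theorem~\ref{thexistenciaradial}, by checking that the spacelike-axis, timelike case is governed by an initial value problem of exactly the same structure as \eqref{dirichletradial}, so that the contraction argument already carried out applies with no essential change. First I would take the radial parametrization $\psi(r,t)=(u(r),r\sinh t,r\cosh t)$ about the $e_1$-axis and compute the induced metric, obtaining $\langle\psi_r,\psi_r\rangle=u'^2-1$, $\langle\psi_t,\psi_t\rangle=r^2$ and $\langle\psi_r,\psi_t\rangle=0$. Hence the tangent plane is Lorentzian, i.e. $\Sigma$ is timelike, precisely when $|u'|<1$; this is the same admissibility condition $1-u'^2>0$ that appeared for the timelike axis, so that $\sqrt{1-u'^2}$ is again well defined. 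A direct computation then gives the spacelike unit normal $N=\frac{1}{\sqrt{1-u'^2}}(1,u'\sinh t,u'\cosh t)$, which is consistent with the stated $2H=-u''/(1-u'^2)^{3/2}-u'/(r\sqrt{1-u'^2})$, identical in form to \eqref{eqHradial}.

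Next I would impose the $\lambda$-translator condition. By Proposition~\ref{propejeparalelodensity} the velocity is now $\v=e_1$, so $\langle N,\v\rangle=1/\sqrt{1-u'^2}$. Substituting into $H=\langle N,\v\rangle+\lambda$ and multiplying by $r$ recasts the equation in the divergence form $(rf(u'))'=2r\phi(u')$, with the very same $f(y)=y/\sqrt{1-y^2}$ and $f^{-1}(y)=y/\sqrt{1+y^2}$, and with $\phi(y)=-1/\sqrt{1-y^2}-\lambda$. This $\phi$ differs from the one in Theorem~\ref{thexistenciaradial} only in the sign of the $1/\sqrt{1-y^2}$ term, a change forced purely by the fact that $e_1$ is spacelike (so $\langle N,\v\rangle>0$) while the timelike axis $e_3$ gave $\langle N,\v\rangle<0$. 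Verifying this causality bookkeeping---that a timelike surface forces a spacelike normal and the restriction $|u'|<1$, and then tracking the resulting sign of $\langle N,\v\rangle$---is the one genuinely new point, and the step I expect to require the most care.

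Finally, I would observe that this sign change leaves every estimate in the proof of Theorem~\ref{thexistenciaradial} intact: $\phi$ is still smooth and bounded near $0$, with $|\phi(0)|=1+\lambda$ finite and the same Lipschitz constant $L_\phi$ (its derivative $-y/(1-y^2)^{3/2}$ has unchanged magnitude), while $f^{-1}$ is unchanged. I would therefore reuse the operator $(\mathsf{T}u)(r)=\int_0^r f^{-1}\big(\tfrac{2}{t}\int_0^t s\,\phi(u'(s))\,ds\big)\,dt$, verify that it maps a small closed ball $\overline{\mathcal{B}(0,\epsilon)}\subset(C^1([0,R]),\|\cdot\|)$ into itself and is a contraction for $R$ small, and apply the Banach fixed point theorem to get a unique fixed point, hence a unique solution of the initial value problem with $u(0)=u'(0)=0$. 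The same L'Hôpital computation at $r=0$ then determines $u''(0)$, so the solution extends with $C^2$-regularity across the axis, and through $\psi$ it yields the unique timelike $\lambda$-translator meeting the $e_1$-axis orthogonally.
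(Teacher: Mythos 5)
Your proposal follows exactly the paper's (largely omitted) argument: the paper also parametrizes the radial surface as $\psi(r,t)=(u(r),r\sinh t,r\cosh t)$, notes that the resulting mean curvature equation has the same form as \eqref{eqHradial}, and invokes the fixed-point proof of Theorem \ref{thexistenciaradial} verbatim. Your causality bookkeeping, the sign change in $\phi$, and the observation that the Lipschitz estimates are unaffected are all correct, so you have simply supplied the details the paper skips.
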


\subsection{Rotational $\lambda$-translators about a timelike axis}\label{subsecclasificacionrotacionalestimelikeaxis}
We begin the classification of rotational $\lambda$-translators by assuming that the rotation axis is timelike. After a change of coordinates, we assume the rotation axis to be the $e_3$-axis. Let $\alpha(s)=(x(s),0,z(s)),\ s\in I\subset\mathbb{R}$ be an arc-length parametrized curve. The map
$$
\psi(s,t)=(x(s)\cos t,x(s)\sin t,z(s)),
$$
parametrizes the rotational surface $\Sigma$ generated by $\alpha(s)$. A unit normal is
$$
N=(z'(s)\cos t,z'(s)\sin t,x'(s)).
$$
From now, we omit the dependence of the variable $s$. A straightforward computation yields that the principal curvatures are
\begin{equation}\label{eqcurvprinc}
\kappa_1=\frac{x''z'-x'z''}{x'^2-z'^2},\qquad \kappa_2=-\frac{z'}{x}.
\end{equation}
Note that $\langle N,N\rangle=z'^2-x'^2=\epsilon,\epsilon=\pm1$, depending on the causality of the surface. That is, if $\Sigma$ is spacelike then $N$ is timelike and $\epsilon=-1$; analogously, if $\Sigma$ is timelike then $N$ is spacelike and $\epsilon=1$. 

\subsubsection{Spacelike $\lambda$-translators}
We begin by assuming that $\Sigma$ is spacelike, i.e. $z'^2-x'^2=-1$. Hence, there exists a function $\theta=\theta(s)$ such that $x'=\cosh\theta$ and $z'=\sinh\theta$. Since $\Sigma$ is a $\lambda$-translator, $H=\langle N,e_3\rangle+\lambda=-\cosh\theta+\lambda$, and from Eq. \eqref{eqcurvprinc} the following system is fulfilled
\begin{equation}\label{sysODE}
\left\lbrace
\begin{array}{l}
x'=\cosh\theta\\
\theta'=2(\cosh\theta-\lambda)-\displaystyle{\frac{\sinh\theta}{x}}.
\end{array}\right.
\end{equation}
We define the phase plane $\Theta=(0,\infty)\times\mathbb{R}$ as the space of solutions of \eqref{sysODE}, with coordinates $(x,\theta)$. The orbits $\gamma(s)=(x(s),\theta(s))$ are the solutions of \eqref{sysODE} when regarded in $\Theta$. The existence and uniqueness of the Cauchy problem of \eqref{sysODE} for an initial data $(x_0,\theta_0)\in\Theta$ ensures that the orbits provide a foliation of $\Theta$, and that two different orbits cannot intersect in $\Theta$.

We define $\Gamma=\Theta\cap\{x=\Gamma(\theta)\}$, where $\Gamma(\theta)$ is given by
$$
\Gamma(\theta)=\frac{\sinh\theta}{2(\cosh\theta-\lambda)},
$$
which is a (possibly disconnected) horizontal graph over the $\theta$-axis. When $\theta\rightarrow\infty$, $\Gamma(\theta)\rightarrow 1/2$ and $\Gamma$ converges to the line $x=1/2$. The points located at $\Gamma$ correspond to points of $\alpha$ with vanishing curvature, i.e. points for which $\theta'=0$. 

The curve $\Gamma$ divides $\Theta$ into connected components, called \emph{monotonicity regions}, where the coordinate functions of an orbit are monotonous. The structure of these monotonicity regions depends on three possible values for $\lambda$, which determine the behavior of $\Gamma$ and ultimately the structure of $\Theta$.

\textbf{\underline{Case $\lambda<1$}}

If $\lambda<1$, the denominator of $\Gamma(\theta)$ is always positive, hence $\Gamma$ is a connected graph over the $\theta$-axis that has $(0,0)$ as finite endpoint. Therefore, $\Theta$ is divided into two monotonicity regions, 
$$
\Theta_1=\{(x,\theta):\ x<\Gamma(\theta),\theta>0\},\quad \Theta_2=\{(x,\theta):\ x>\Gamma(\theta),\theta>0\}\cup\{(x,\theta):\ \theta<0\}.
$$
For instance, $x'>0$ everywhere, $\theta'>0$ in $\Theta_2$ and $\theta'<0$ in $\Theta_1$. See Fig \ref{fig1}, left.

\begin{figure}[h]
\begin{center}
\includegraphics[width=.4\textwidth]{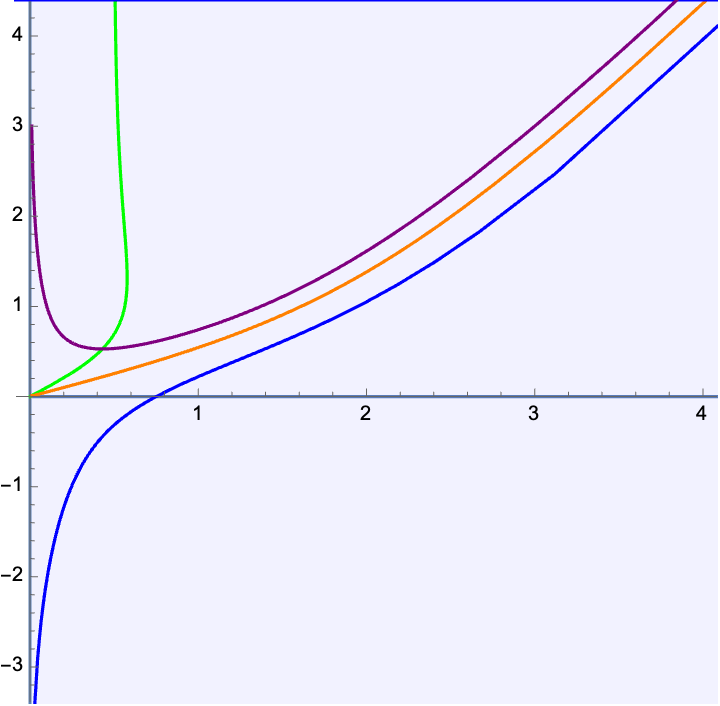}
\includegraphics[width=.55\textwidth]{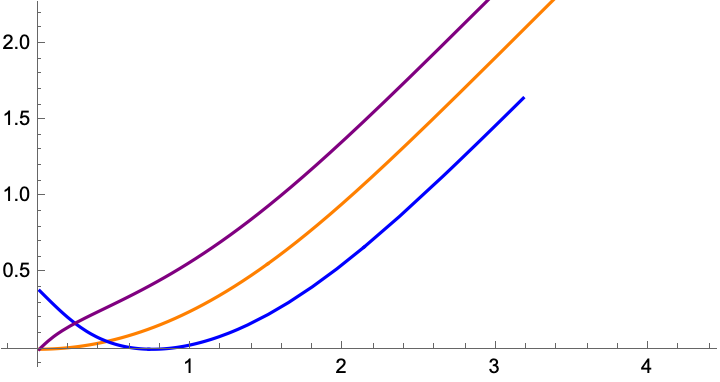} 
\end{center}
\caption{Left: the phase plane of a timelike rotational $\lambda$-translator about a timelike axis, and the possible orbits for $\lambda<1$. Right: the profile curves of the rotational $\lambda$-translators corresponding to each orbit. Here, $\lambda=1/2$.}
\label{fig1}
\end{figure}

Now we describe the structure of the orbits. First, from Th. \ref{thexistenciaradial} we know the existence of the orbit $\gamma_+$ having $(0,0)$ as endpoint. By monotonicity, $\gamma_+$ is contained in the region $\Theta_2$ with $\theta>0$. See Fig. \ref{fig1}, left, the orbit in orange. The corresponding rotational $\lambda$-soliton is a strictly convex, entire vertical graph that intersects orthogonally the rotation axis. See Fig. \ref{fig1}, right, the curve in orange.

Now, fix some $x_0>0$, let $\gamma_{x_0}(s)$ the orbit passing through $(x_0,0)$ at $s=0$ and $\Sigma_{x_0}$ the corresponding rotational $\lambda$-soliton. Recall that $z'=\sinh\theta$. When $s$ increases, $\gamma_{x_0}$ is strictly contained in $\Theta_2$ with $\theta>0$, hence $\Sigma_{x_0}$ has strictly increasing height function. When $s$ decreases, $\gamma_{x_0}(s)\rightarrow(0,\theta_0)$, $\theta_0<0$, as $s\rightarrow s_0<0$. Hence, for $s\in(s_0,0)$, $\Sigma_{x_0}$ has decreasing height function and reaches a minimum at $s=0$. Therefore, $\Sigma_{x_0}$, is a vertical graph with strictly positive Gauss curvature, non-monotonous height function, and converges to the axis of rotation with a cusp point where the derivative of its profile curve tends to $-\pi/4$ as $\theta_0\rightarrow-\infty$. See Fig \ref{fig1}, right, the curve in blue.

Finally, since the orbits foliate the whole $\Theta$, let $\gamma(s)$ be any orbit lying above $\gamma_+$. By monotonicity, $\gamma(s)$ must intersect the curve $\Gamma$, say at $s=0$. For $s\rightarrow\infty$, the orbit $\gamma(s)$ lies in $\Theta_2$ with $x(s),\theta(s)\rightarrow\infty$. When $s$ decreases, $\gamma(s)\rightarrow(0,\theta_1)$ as $s\rightarrow s_0<0$, and the profile curve converges to the axis of rotation with a cusp point where its derivative tends to $\pi/4$. Since $\theta(s)>0$ everywhere, $z'(s)>0$ and the height function of the corresponding rotational $\lambda$-soliton is strictly increasing. Furthermore, its Gauss curvature starts being negative, then vanishes and ends up being strictly positive everywhere. See Fig \ref{fig1}, right, the curve in purple.

\textbf{\underline{Case $\lambda=1$}}

If $\lambda=1$, the denominator of $\Gamma(\theta)$ vanishes at $\theta=0$. Therefore, $\Gamma$ has the $x$-axis as a horizontal asymptote, with $\Gamma(\theta)\rightarrow\infty$ as $\theta\rightarrow0$. 

\begin{figure}[h]
\begin{center}
\includegraphics[width=.45\textwidth]{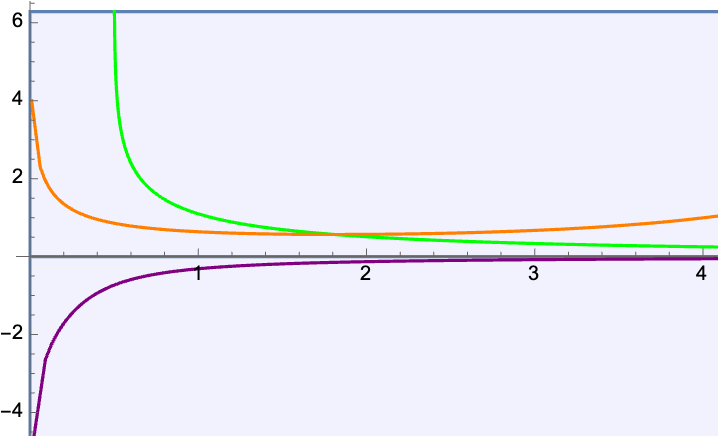}
\includegraphics[width=.45\textwidth]{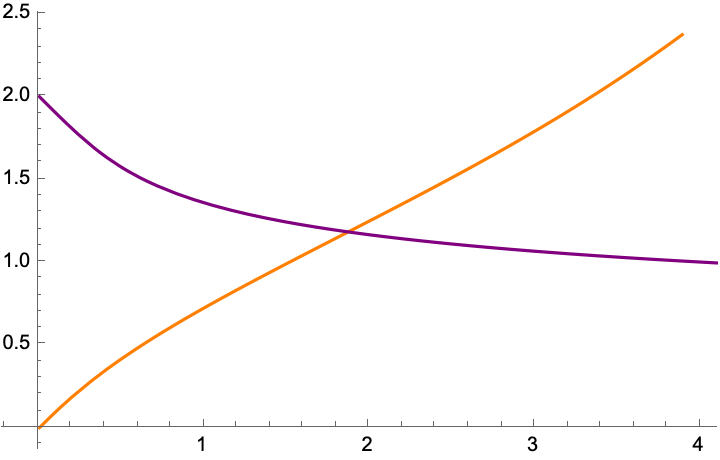} 
\end{center}
\caption{Left: the phase plane of a timelike rotational $\lambda$-translator about a timelike axis, and the possible orbits for $\lambda=1$. Right: the profile curves of the rotational $\lambda$-translators corresponding to each orbit.}
\label{fig2}
\end{figure}

First, we describe a trivial solution of \eqref{sysODE}. Since $\lambda=1$, the orbit $\gamma_+(s)=(s,0)$ solves \eqref{sysODE}. This orbit generates a horizontal plane, which is a $1$-soliton when oriented with unit normal $N=e_3$. By the uniqueness of the Cauchy problem, no orbit can intersect $\gamma_+$.

Now, assume that $\gamma(s)$ is an orbit lying in the monotonicity region $\{(x,\theta):\ \theta<0\}$. When $s\rightarrow\infty$, $x(s)\rightarrow\infty$ and $\theta(s)\rightarrow0$, i.e. $\gamma$ converges to $\gamma_+$. When $s$ decreases, it converges to some $s_0<0$ such that $\gamma(s)\rightarrow(0,\theta_0),\ \theta_0<0$. The corresponding rotational $\lambda$-soliton has strictly decreasing height function and negative Gauss curvature, and converges to the axis of rotation with a cusp point of angle tending to $-\pi/4$ as $\theta_0\rightarrow-\infty$.

Finally, assume that $\gamma(s)$ is an orbit lying in $\{(x,\theta):\ \theta>0\}$. Then, $\gamma(s)\rightarrow(0,\theta_1),\ \theta_1>0,$ as $s\rightarrow s_1<0$. On the other hand, $\gamma(s)$ intersects $\Gamma$ when $s$ increases and then $x(s),\theta(s)\rightarrow\infty$ as $s\rightarrow\infty$. This time the cusp point at the rotation axis tends to $\pi/4$ as $\theta_1\rightarrow\infty$.

%Next, we prove that for $s$ big enough, $\gamma(s)$ ends up intersecting $\Gamma$. Arguing by contradiction, assume that $x(s)\rightarrow\infty$ and $\theta(s)\rightarrow0$ for $s\rightarrow\infty$, which necessarily holds for $\theta'(s)<0$ by monotonicity. 
%
%Given $\varepsilon>0$ small enough, there exists $s_*$ big enough such that 
%$$
%1-\frac{1}{\cosh\theta}<\epsilon,\quad \sinh\theta<\epsilon,\quad \frac{1}{x}<\epsilon,
%$$
%for any $s>s_*$. As a matter of fact, $\sinh\theta/x<\epsilon^2$ which implies $\sinh\theta/(x\cosh\theta)<\epsilon^2$. Therefore, \eqref{sysODE} yields

%Consequently, $\gamma(s)$ intersects the curve $\Gamma$ and its coordinates satisfy $x(s),\theta(s)\rightarrow\infty$ as $s\rightarrow\infty$. The corresponding rotational $1$-soliton has Gauss curvature changing of sign and is a strictly convex, vertical graph outside a compact set.

\textbf{\underline{Case $\lambda>1$}}
Finally, assume that $\lambda>1$ and let $\theta_0>0$ be the unique positive solution to $\cosh\theta=\lambda$. Then, the denominator of $\Gamma(\theta)$ vanishes exactly at $\pm\theta_0$. Consequently, $\Gamma$ has the point $(0,0)$ as endpoint, appears in $\Theta$ for $\theta\in(-\theta_0,0]\cup(\theta_0,\infty)$ and has $\theta=\pm\theta_0$ as asymptotes.

\begin{figure}[h]
\begin{center}
\includegraphics[width=.45\textwidth]{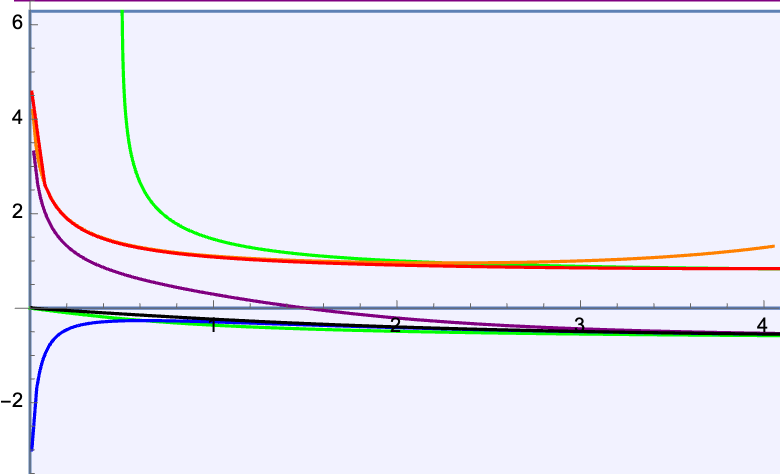}
\includegraphics[width=.45\textwidth]{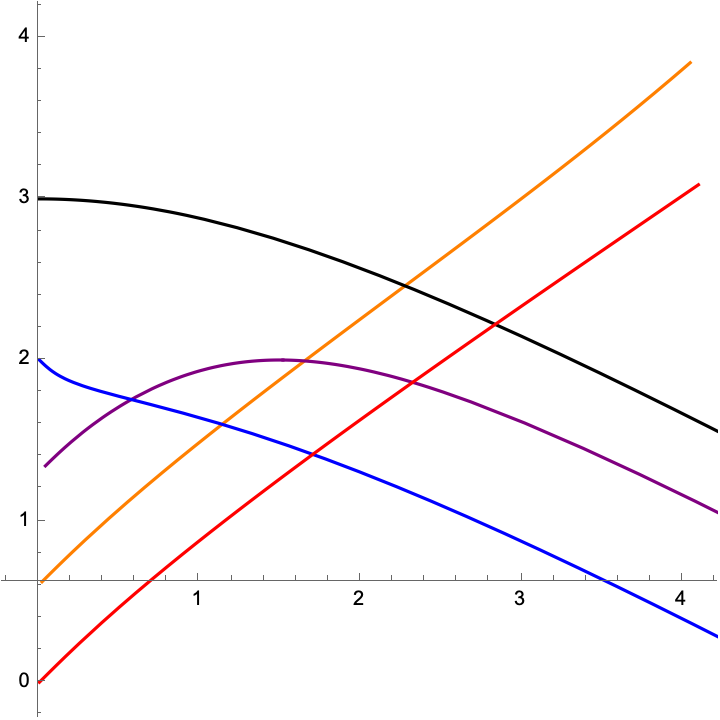} 
\end{center}
\caption{Left: the phase plane of a spacelike rotational $\lambda$-translator about a timelike axis, and the possible orbits for $\lambda>1$. Right: the profile curves of the rotational $\lambda$-translators corresponding to each orbit. Here, $\lambda=2$.}
\label{fig3}
\end{figure}

First, by Th. \ref{thexistenciaradial} there exists a rotational $\lambda$-translator intersecting orthogonally the rotation axis. By comparison, the height function at the intersection with the rotation axis is decreasing. Hence, its corresponding orbit has the point $(0,0)$ as endpoint, lies in the region $\theta<0,x>\Gamma(\theta)$ and converges to the line $\theta=-\theta_0$. This $\lambda$-translator has strictly positive Gauss curvature and strictly decreasing height function. See Fig. \ref{fig3}, the oribt and profile curve in black.

Let $\gamma$ be any orbit in the region $\{(x,\theta):\ \theta<0\}$. Then, $\gamma(s)$ converges to a finite point in the $\theta$-axis as $s$ decreases. When $s$ increases, $\gamma_1$ intersects $\Gamma$ and then ends up converging to the line $\theta=-\theta_0$. The corresponding rotational $\lambda$-translator has a cusp point at the rotation axis, strictly decreasing height function and Gauss curvature starting being negative and then being positive. See \ref{fig3}, the orbit and profile curve in blue.

Now, we fix some $x_0>0$ and let $\gamma(s)$ be the orbit passing through $(x_0,0)$ at $s=0$. When $s$ increases, $\gamma$ converges to the line $\theta=-\theta_0$. When $s$ decreases, $\gamma$ converges to a finite point, $(0,\theta_{x_0})$, in the $\theta$-axis. The corresponding rotational $\lambda$-translator has a cusp point at the rotation axis, strictly positive Gauss curvature and height function increasing and then decreasing. See Fig. \ref{fig3}, the orbit and profile curve in pruple.

Next, for each point $(x_1,\theta_1)\in\Gamma$ there exists an orbit $\gamma$ passing through $(x_1,\theta_1)$ at $s=0$. When $s\rightarrow\infty$, the coordinates of $\gamma$ satisfy $x(s),\theta(s)\rightarrow\infty$. When $s$ decreases, $\gamma$ converges to a finite point, $(0,\theta_{x_1})$, which obviously satisfies $\theta_{x_1}>\theta_1$. The rotational $\lambda$-translator has strictly increasing height function and Gauss curvature starting negative and then being positive. See Fig. \ref{fig3}, the orbit and curve in orange.

By uniqueness of the Cauchy problem, it must happen that $\theta_{x_0}$ and $\theta_{x_1}$ as previously defined satisfy $\theta_{x_0}\leq\theta_{x_1}$. Therefore, for $x_0\rightarrow\infty$ we have $\theta_{x_0}\nearrow\theta_\infty^0$. Similarly, for $x_1\rightarrow\infty$ we have $\theta_{x_1}\searrow\theta_\infty^1$. Clearly $\theta_\infty^0\leq\theta_\infty^1$.

Now, let be $\hat{\theta}\in[\theta_\infty^0,\theta_\infty^1]$ and let $\hat{\gamma}$ be the orbit having $(0,\hat{\theta})$ as endpoint. The only possibility for $\hat{\gamma}(s)$ is to converge to $\theta=\theta_0$ as $s\rightarrow\infty$. The corresponding rotational $\lambda$-translator has a cusp at the rotation axis, strictly negative Gauss curvature and strictly increasing height function. See Fig. \ref{fig3}, the orbit and profile curve in red.

This completes the classification of spacelike, rotational $\lambda$-translators about a timelike axis.

\subsubsection{Timelike $\lambda$-translators}
Now, we keep considering the rotation axis to be the $e_3$-axis, but this time the $\lambda$-translator is timelike. Hence, $N$ is spacelike and the coordinate functions of the profile curve satisfy $z'^2-x'^2=1$. This time, we ensure the existence of a function $\theta=\theta(s)$ such that $x'=\sinh\theta,\ z'=\cosh\theta$ and the following system is fulfilled
\begin{equation}
\left\lbrace
\begin{array}{l}
x'=\sinh\theta\\
\theta'=2(\sinh\theta-\lambda)-\displaystyle{\frac{\cosh\theta}{x}}.
\end{array}\right.
\end{equation}

First, we study the curve $\Gamma=\Theta\cap\{\Gamma(\theta)\}$, where this time $\Gamma(\theta)$ is
$$
\Gamma(\theta)=\frac{\cosh\theta}{2(\sinh\theta-\lambda)}.
$$
By naming $\theta_0$ to the unique solution to $\sinh\theta=\lambda$, we conclude that $\Gamma$ is a connected curve only defined for $\theta\in(\theta_0,\infty)$ that converges to the line $x=1/2$ as $\theta\rightarrow\infty$ and has the line $\theta=\theta_0$ as asymptote when $\theta\rightarrow\theta_0$. See Fig. \ref{fig4}, left, the curve in green. 

We begin our description of the orbits by fixing some $x_0>0$ and considering the orbit $\gamma$ passing through $(x_0,0)$ at $s=0$. When $s$ increases, $\gamma$ lies in the region $\{\theta<0\}$ and converges to some $(0,\theta_{x_0}),\ \theta_{x_0}^1<0$. When $s$ decreases, $\gamma$ lies in $\{\theta>0\}$ and converges to $(0,\theta_{x_0}^2),\ \theta_{x_0}^2>0$. The corresponding rotational $\lambda$-translator has strictly increasing height (since $z'=\cosh\theta$) and two cusp points at the rotation axis. See Fig. \ref{fig4}, the orbit and curve in purple.

Following the same ideas as developed in the case $\lambda>1$ in the spacelike setting, we conclude the existence of a 1-parameter family of orbits intersecting the curve $\Gamma$, and of the type that converges to $\theta=\theta_0$ without intersecting $\Gamma$. Details are skipped. See Fig. \ref{fig4}, the orbit and curve in orange and red, respectively.

\begin{figure}[h]
\begin{center}
\includegraphics[width=.45\textwidth]{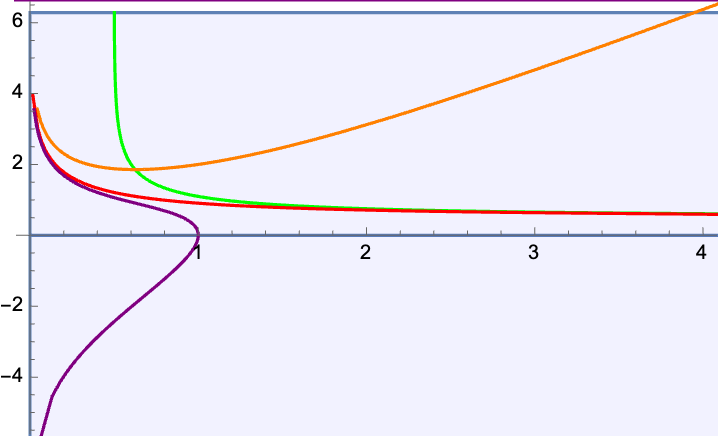}
\includegraphics[width=.35\textwidth]{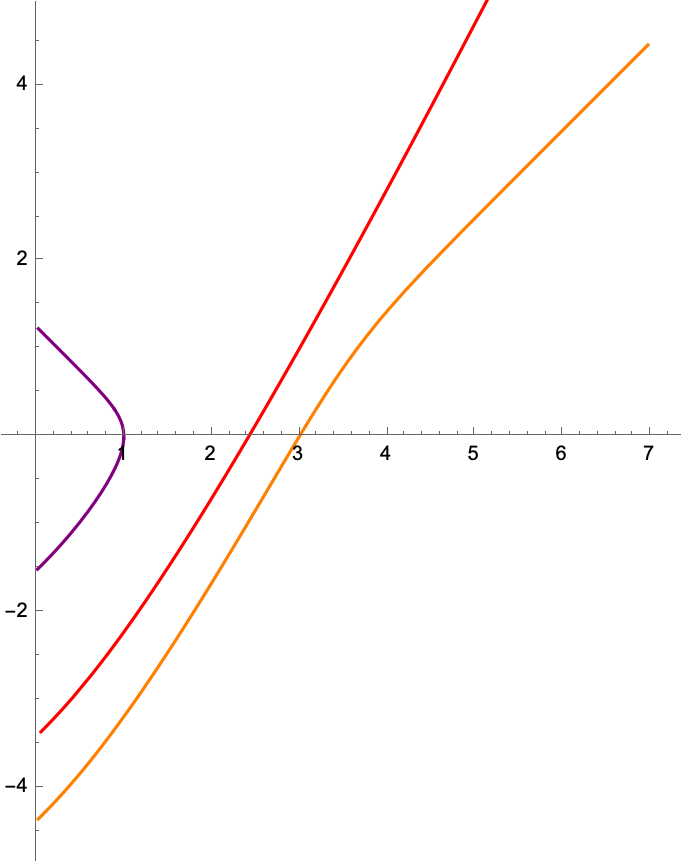} 
\end{center}
\caption{Left: the phase plane of a timelike rotational $\lambda$-translator about a timelike axis, and the possible orbits. Right: the profile curves of the rotational $\lambda$-translators corresponding to each orbit. Here, $\lambda=1/2$.}
\label{fig4}
\end{figure}

\subsection{Rotational $\lambda$-translators about a spacelike axis}\label{subsecclasificacionrotacionalesspacelikeaxis}

Once we have classified rotational $\lambda$-translators about a timelike axis, we consider the case where the rotation axis is spacelike. After a change of coordinates we assume the $e_1$-axis to be this rotation axis. Hence, we rotate an arc-length parametrized curve $(x(s),0,z(s))$ about the $e_1$-axis, obtaining a surface $\Sigma$ parametrized by
$$
\psi(s,t)=(x(s),z(s)\cosh t,z(s)\sinh t),\quad z(s)>0,
$$
whose unit normal is
$$
N=(z'(s),x'(s)\cosh t,x'(s)\sinh t).
$$
Recall that this time the angle function is measured by taking the velocity vector to be $\v=e_1$, i.e. $\nu=\langle N,e_1\rangle=z'(s)$. Again, from now on we omit the dependence on the variable $s$. In a similar way to the previous case, we distinguish the causality of the surface to be timelike or spacelike.

\subsubsection{The spacelike case}
If $\Sigma$ is spacelike then $N$ is timelike, i.e. $z'^2-x'^2=-1$, hence $z'=\sinh\theta$ and $x'=\cosh\theta$. The associated nonlinear autonomous system is
\begin{equation}
\left\lbrace
\begin{array}{l}
z'=\sinh\theta\\
\theta'=-2(\sinh\theta+\lambda)-\displaystyle{\frac{\cosh\theta}{z}}.
\end{array}\right.
\end{equation}
We start by determining the structure of the phase plane by the behavior of the graph $z=\Gamma(\theta)$, which has the expression
$$
\Gamma(\theta)=-\frac{\cosh\theta}{2(\sinh\theta+\lambda)}
$$
Note that since $\lambda>0$, $\sinh\theta+\lambda$ never vanishes whenever $\theta\geq0$. By naming $\theta_0<0$ to the unique solution to the equation $\sinh\theta+\lambda=0$, we derive that $\Gamma(\theta)$ is only defined for $\theta<\theta_0$, has an asymptote at $\theta=\theta_0$ as $\theta\rightarrow\theta_0$ and converges to the line $z=1/2$ as $\theta\rightarrow-\infty$. See Fig. \ref{fig5}, left, the curve in green.
\begin{figure}[h]
\begin{center}
\includegraphics[width=.45\textwidth]{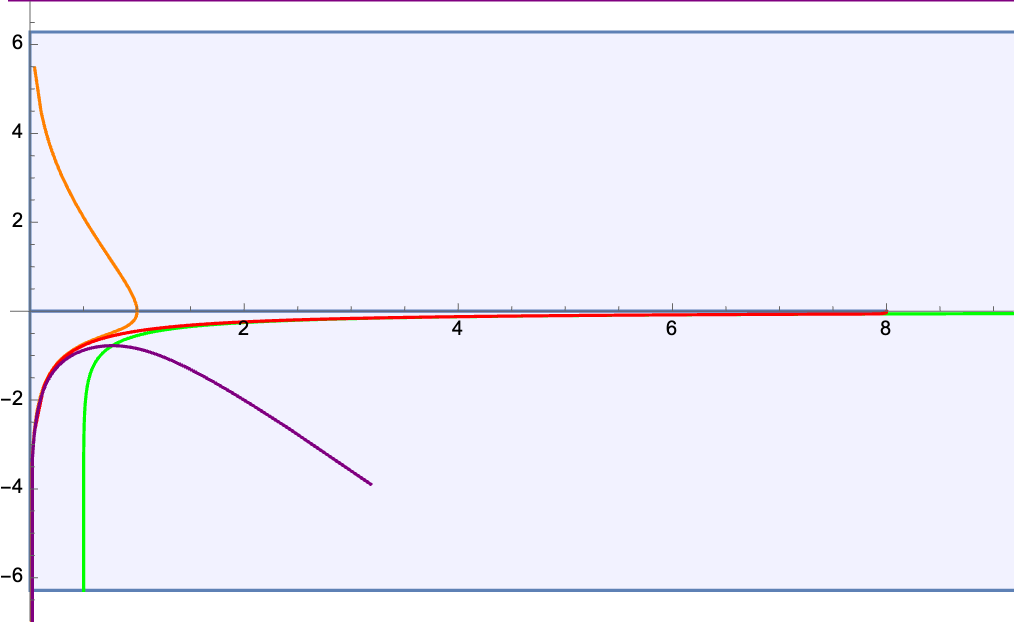}
\includegraphics[width=.45\textwidth]{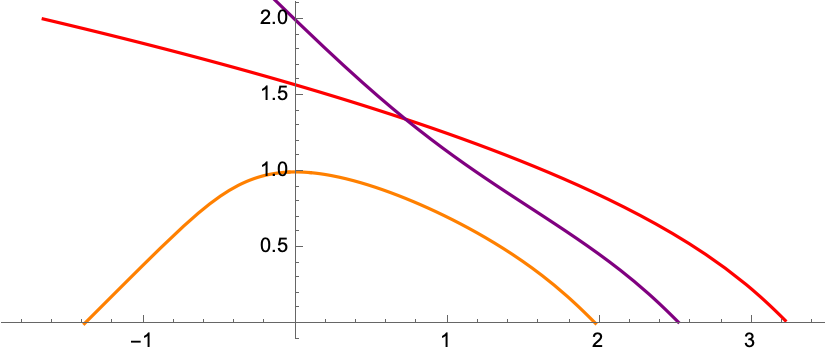} 
\end{center}
\caption{Left: the phase plane of a spacelike rotational $\lambda$-translator about a spacelike axis, and the possible orbits. Right: the profile curves of the rotational $\lambda$-translators corresponding to each orbit. Here, $\lambda=0$.}
\label{fig5}
\end{figure}

Now, the behavior of the orbits follows easily from this structure. First, fix some $z_0>0$ and let $\gamma$ be the orbit passing through $(x_0,0)$ at $s=0$. When $s$ increases, $\gamma$ lies in the region $\{\theta<0\}$ and by monotonicity never intersects $\Gamma$, hence converges to some $(0,\theta_{z_0}^1)$. When $s$ decreases, $\gamma$ lies in the region $\{\theta>0\}$ and converges to some $(0,\theta_{z_0}^2)$. The corresponding rotational $\lambda$-translator has strictly positive Gauss curvature and two cusp points at the rotation axis, starting with strictly increasing height function, reaching a maximum (precisely with value $z_0$) and then decreasing. See Fig. \ref{fig5}, the orbit and curve in orange.

A similar argument to the ones exhibited in the timelike case and the case $\lambda>1$ in the spacelike case allows us to conclude the existence of an orbit that on the one hand converges to some $(0,\theta_\infty),\ \theta_\infty<0$, and on the other hand converges to the line $\theta=\theta_0$ lying above the curve $\Gamma$ and never intersecting it. Details are also skipped at this point. The corresponding rotational $\lambda$-translator has strictly decreasing height function and reaches the rotation axis with a cusp point. Moreover, its Gauss curvature is strictly positive. See Fig. \ref{fig5}, the orbit and curve in red.

Finally, take some $(z_1,\theta_1)\in\Gamma$ and let $\gamma$ the orbit passing through this point at $s=0$. When $s$ increases, $\gamma$ lies in the region $\{\theta<0,\ z<\Gamma(\theta)\}$ and converges to some $(0,\theta_{z_1})$. When $s$ increases, $\gamma$ stays in the region $\{\theta<0,\ z>\Gamma(\theta)\}$. The corresponding rotational $\lambda$-translator has strictly decreasing height function, reaches the rotation axis at a cusp point and has Gauss curvature of changing sign: first negative and then positive. See Fig. \ref{fig5}, the orbit and curve in purple.

\subsubsection{The timelike case}
Finally, we approach the final case of our study by assuming $\Sigma$ timelike. Therefore, $N$ is spacelike which yields $z'=\cosh\theta,x'=\sinh\theta$, and the associated nonlinear autonomous system is
\begin{equation}
\left\lbrace
\begin{array}{l}
z'=\cosh\theta\\
\theta'=-2(\cosh\theta+\lambda)-\displaystyle{\frac{\sinh\theta}{z}}.
\end{array}\right.
\end{equation}
Note that in this case, the height function of every rotational $\lambda$-translator is strictly increasing, being the $x$-function the one susceptible of changing its monotonicity.
The graph
$$
z=\Gamma(\theta)=-\frac{\sinh\theta}{2(\cosh\theta+\lambda)}
$$
is only defined for $\theta<0$, it has no asymptote in $\theta$ and converges to $z=1/2$ as $\theta\rightarrow-\infty$. See Fig. \ref{fig6}, left, the curve in green.
\begin{figure}[h]
\begin{center}
\includegraphics[width=.45\textwidth]{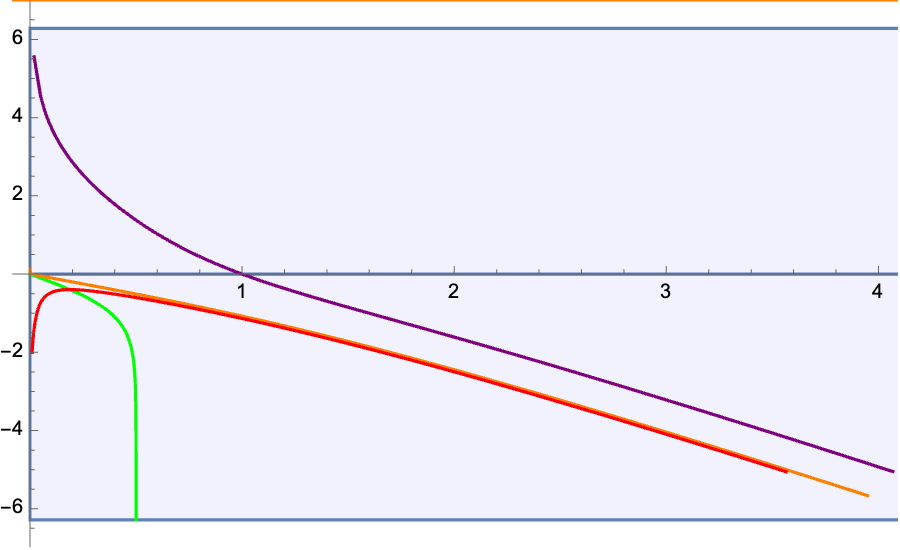}
\includegraphics[width=.35\textwidth]{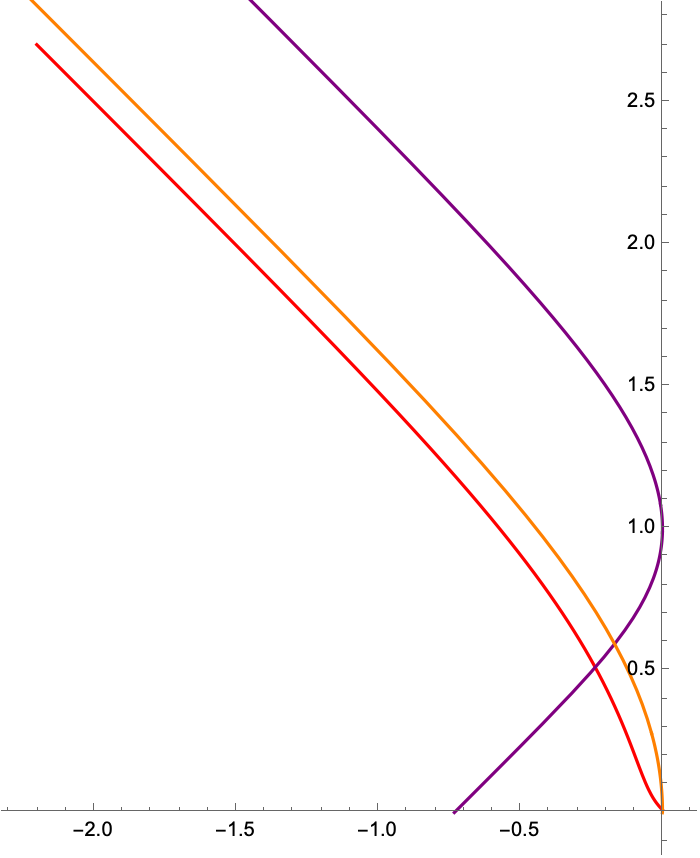} 
\end{center}
\caption{Left: the phase plane of a timelike rotational $\lambda$-translator about a spacelike axis, and the possible orbits. Right: the profile curves of the rotational $\lambda$-translators corresponding to each orbit. Here, $\lambda=0$.}
\label{fig6}
\end{figure}

First, recall the existence of a rotational $\lambda$-translator about the $e_1$-axis and intersecting it orthogonally, see Th. \ref{thexistenciaradial}. The corresponding orbit, $\gamma$, has $(0,0)$ as endpoint and by comparison $x'<0$, which yields $\theta<0$, and therefore it lies in the region $\{\theta<0,\ z>\Gamma(\theta)\}$. The corresponding $\lambda$-translator has strictly decreasing $x$-function and strictly positive Gauss curvature. See Fig. \ref{fig6}, the orbit and curve in orange.

Now, every orbit $\gamma$ remaining either passes through some $(z_0,0),\ z_0>0$, or intersects $\Gamma$ at some $(z_1,\theta_1)$, both assumed at the instant $s=0$. In the first case, when $s$ decreaes, $\gamma$ has some $(0,\theta_{z_0})$ as endpoint in the region $\{\theta>0\}$, and when $s$ increases it lies in the region $\{\theta<0\}$. The corresponding rotational $\lambda$-translator has $x$-function starting being increasing and then decreases, and has Gauss curvature of changing sign: first negative and then positive. See Fig. \ref{fig6}, the orbit and curve in purple.

In the second case, $\gamma$ is fully contained in the region $\{\theta<0\}$. When $s$ increases, the coordinates of $\gamma$ satisfy $z\rightarrow\infty$ and $\theta\rightarrow-\infty$. When $s$ decreases, $\gamma$ converges to some $(0,\theta_2)$. The corresponding rotational $\lambda$-translator has strictly decreasing $x$-function and Gauss curvature of changing sign: first negative and then positive.

This concludes the classification of every rotational $\lambda$-translator.

\section*{Statements and Declarations}  

\textbf{Funding} Antonio Bueno has been partially supported by the Project CARM, Programa Regional de Fomento de la Investigaci\'{o}n, Fundaci\'{o}n S\'{e}neca-Agencia de Ciencia y Tecnolog\'{\i}a Regi\'{o}n de Murcia, reference 21937/PI/22

Irene Ortiz is partially supported by the grant PID2021-124157NB-I00 funded by MCIN/AEI/

\vspace{-.25cm}\hspace{-.75cm} 10.13039/501100011033/ "ERDF A way of making Europe", 
Spain, and she is also supported by Comunidad Aut\'{o}noma de la Regi\'{o}n
de Murcia, Spain, within the framework of the Regional Programme
in Promotion of the Scientific and Technical Research (Action Plan 2022),
by Fundaci\'{o}n S\'{e}neca, Regional Agency of Science and Technology,
REF. 21899/PI/22.

\textbf{Authors contributions.} All authors contributed, read and approved the final manuscript.

\textbf{Conflict of interest.} The authors have no relevant financial or non-financial interests to disclose.

Departamento de Ciencias, Centro Universitario de la Defensa de San Javier, Santiago de la Ribera E-30729, Spain

$\mathtt{antonio.bueno@cud.upct.es}$, \qquad $\mathtt{irene.ortiz@cud.upct.es}$

\end{document}